\documentclass[11pt]{article}
\usepackage{amssymb}
\usepackage{amsmath,amsthm,enumerate,verbatim}
\usepackage{amsfonts}
\usepackage[pdftex,pdfstartview=FitH]{hyperref}
\usepackage{color}
\setcounter{MaxMatrixCols}{10}
\usepackage{marginnote}
\hypersetup{colorlinks=false,linkbordercolor={1 1 1},citebordercolor={1 1 1}}
\newtheorem {theorem}{Theorem}[section]
\newtheorem {proposition}{Proposition}[section]

\newtheorem {lemma}{Lemma}[section]

\newtheorem {example}{Example}[section]
\newtheorem {definition}{Definition}[section]
\newtheorem {remark}{Remark}[section]
\textwidth 16.8cm
\textheight 23cm
\oddsidemargin 0.3cm
\topmargin -0.7cm

%\input{tcilatex}

% def by Ting Kei Pong

\def\R{{\mathbb{R}}}

\title{\bf \sf SOS-convex Semi-algebraic Programs and its Applications to Robust Optimization: A Tractable Class of Nonsmooth Convex Optimization \thanks{Research was partially supported by a research grant from Australian Research Council.}
}

\author{N. H. Chieu\thanks{School of Mathematics and Statistics, University of New South Wales, Sydney NSW 2052, Australia, and  Department of Mathematics, Vinh University, Vinh, Nghe An 42118, Vietnam. Email: nhchieu@unsw.edu.au,  nghuychieu@vinuni.edu.com.}, J.W. Feng \thanks{School of Civil and Environmental Engineering, University of New South Wales, Sydney NSW 2052, Australia.
E-mail: jinwen.feng@unsw.edu.au},  W. Gao \thanks{School of Civil and Environmental Engineering, University of New South Wales, Sydney NSW 2052, Australia.
E-mail: w.gao@unsw.edu.au}, G. Li\thanks{School of Mathematics and Statistics, University of New South Wales, Sydney NSW 2052, Australia.
E-mail: g.li@unsw.edu.au} \ and \ D. Wu \thanks{School of Civil and Environmental Engineering, University of New South Wales, Sydney, NSW 2052, Australia. Email: di.wu@unsw.edu.au}}

\date{{\bf Dedicated to the memory of Jon Borwein who was of great inspiration to us} \\
\bigskip
 \today }

\begin{document}
\maketitle

\begin{abstract}
In this paper, we introduce a new class of nonsmooth convex functions called SOS-convex semialgebraic functions extending the recently proposed  notion of SOS-convex polynomials. This class of nonsmooth convex functions covers many common nonsmooth functions arising in the applications
such as the Euclidean norm, the maximum eigenvalue function and
the least squares functions with $\ell_1$-regularization or elastic net regularization used in statistics and compressed sensing. We show that, under commonly used strict feasibility conditions,
the optimal value and an optimal solution of SOS-convex semi-algebraic programs can be  found
by solving a single semi-definite programming problem (SDP). We achieve the results by using
tools from semi-algebraic geometry, convex-concave minimax theorem and a recently established Jensen inequality type result for SOS-convex polynomials.
As an application, we outline how the derived results can be applied to show that robust SOS-convex optimization problems under restricted spectrahedron data uncertainty enjoy exact SDP relaxations. This extends the existing exact SDP relaxation result for restricted ellipsoidal data uncertainty and answers the open questions left in \cite{JLV15} on how to recover a robust solution from the semi-definite programming relaxation in this broader setting.
\end{abstract}

{\bf Keywords:} Nonsmooth optimization, Convex optimization, SOS-convex polynomial, semi-definite program, robust optimization.
\section{Introduction}

Convex optimization is ubiquitous across science and engineering \cite{BN01, Jon}. It has found applications in a wide range of disciplines,
 such as automatic control systems, signal processing,  electronic circuit design, data analysis, statistics (optimal design), and finance (see \cite{BN01, Boyd} and the references therein). 
 The key to the success in solving convex optimization problems is that convex functions exhibit a local to global phenomenon: every local minimizer is a global minimizer.
Despite the great success of theoretical and algorithmic development and its wide application, we note that a convex optimization problem is, in general,
NP-hard  from the complexity point of view.

Recently, for convex polynomials, a new notion of sums-of-squares-convexity (SOS-convexity) \cite{Parrilo,HN10} has been proposed as a tractable sufficient condition for convexity
based on semidefinite programming. The SOS-convex polynomials cover many commonly used convex polynomials such as convex quadratic functions and convex separable polynomials. An appealing feature of an SOS-convex polynomial is that deciding whether a polynomial is SOS-convex or not can be equivalently rewritten
as a feasibility problem of a semi-definite programming problem (SDP) which can be validated efficiently. It has also been recently shown that for an SOS-convex optimization problems, its optimal value and optimal solution
can be found by solving a single semi-definite programming problem \cite{Lasserre1} (see also \cite{jl1,jl_ORL}).
%\marginnote{\begin{color}{blue}\small{The red sentence should be supported\\  by citations}\end{color}}
 On the other hand, many modern applications of optimization to the area of statistics, machine learning, signal processing and image processing often result in
structured nonsmooth convex optimization problems \cite{Boyd}.  These optimization problems often take the following generic form
$\min_{x \in \mathbb{R}^{n}} \{g(x)+h(x)\}$,
where $g:\mathbb{R}^{n} \rightarrow \mathbb{R}$ is a convex quadratic function and $h:\mathbb{R}^{n} \rightarrow \mathbb{R}$ is a nonsmooth function.
For  example, in many signal processing applications $g$ represents the quality of the recovered signal while $h$ serves as a regularization
which enforces prior knowledge of the form of the signal, such as simplicity/sparsity  (in the sense that the solution has fewest nonzero entries).  Some typical choices of the regularization function promoting the sparsity of the solution are the
so-called $\ell_1$-norm and the weighted sum of $\ell_1$-norm and $\ell_2$-norm (referred as the elastic net regularization \cite{Zou}),
and is therefore nonsmooth. With these applications in mind, this then motivates the following natural and important question:
\begin{center}
{\it Is it possible to extend the SOS-convex polynomials and SOS-convex optimization problems to the nonsmooth setting which not only covers broad nonsmooth problems
arising in common applications but also maintains the appealing feature of tractability (in terms of semidefinite programming)?
}
\end{center}

The purpose of this paper is to provide an affirmative answer for the above question. In particular, in this paper, we make the following contributions:
\begin{itemize}
 \item[{\rm (1)}] In Section 3, we identify a new class of nonsmooth convex functions which we refer as SOS-convex semi-algebraic functions (Definition 3.1).
This class of nonsmooth convex functions covers not only convex functions which can be expressed as the  maximum of
finitely many SOS-convex polynomials (in particular, SOS-convex polynomials) but also many common nonsmooth functions arising in the applications
such as the Euclidean norm, the maximum eigenvalue function (by identifying
the symmetric matrix spaces $S^n$ as an Euclidean space with dimension $n(n+1)/2$) and
the least squares functions with $\ell_1$-regularizer or elastic net regularizer used in compressed sensing.

\item[{\rm (2)}] In Section 4, we show that, under a commonly used strict feasibility condition,
the optimal value and an optimal solution of SOS-convex semi-algebraic optimization problems can be  found
by solving a single semi-definite programming problem which extends the previous known result of SOS-convex polynomial optimization problems (Theorem \ref{th:exact_SDP} and Theorem \ref{th:3.2}). We achieve this by exploiting
tools from semi-algebraic geometry, convex-concave minimax theorem and a recently established Jensen inequality type result for SOS-convex polynomials.

\item[{\rm (3)}] In Section 5,  we briefly outline how our results can be applied to show that robust SOS-convex optimization problems under restricted spectrahedron data uncertainty enjoy exact
semi-definite programming relaxations. This  extends the existing result for restricted ellipsoidal data uncertainty established in \cite{jl1} and
answers the open questions left in \cite{jl1} on how to recover a robust solution from the semi-definite programming relaxation in this broader setting.
\end{itemize}

\section{Preliminaries}
First of all, let us recall some  notations and basic facts on  sums-of-squares polynomial and semi-definite programming problems.  Recall that $S_{n}$ denotes the space of symmetric $(n\times n)$ matrices
with the
trace inner product and $\succeq $ denotes the L\"{o}wner partial order of $
S^{n}$, that is, for $M,N\in S^{n},$ $M\succeq N$ if and only if
$(M-N)$ is positive semidefinite. Let $S^{n}_+:=\{M\in S_{n}\mid M\succeq 0\}$ be the closed convex cone of positive semidefinite symmetric $(n\times n)$ matrices.
Note that for $M,N\in S^{n}_+$, the inner product,
$(M,N):=\mathrm{Tr\ }[MN]$, where $\mathrm{Tr\ }[.]$ refers to the
trace operation. Note also that
$%
 M\succ 0$ means that $M$ is positive definite. In the sequel, unless otherwise stated, the space $\R^n$ is equipped with  the Euclidean norm, that is, $\|x\|:=(\sum\limits_{i=1}^n |x_i|^2)^{1/2}$ for all $x=(x_1,x_2,...,x_n)\in \R^n$.  Consider a polynomial $f$ with degree at most $d$ where $d$ is an even number. Let
$\mathbb{R}_d[x_1,\ldots,x_n]$ be the space consisting of all real
polynomials on $\mathbb{R}^n$ with degree at most $d$   and let $s(d,n)$ be the dimension of
$\mathbb{R}_d[x_1,\ldots,x_n]$.
Write the canonical basis of
$\mathbb{R}_d[x_1,\ldots,x_n]$ by
\[
x^{(d)}:=(1,x_1,x_2,\ldots,x_n,x_1^2,x_1x_2,\ldots,x_2^2,\ldots,x_n^2,\ldots,x_1^{d},\ldots,x_n^d)^T
\]
and let $x^{(d)}_{\alpha}$ be the $\alpha$-th coordinate of $x^{(d)}$,
$1 \le \alpha \le s(d,n)$.  Then, we can write
$f(x)= \sum_{\alpha=1}^{s(d,n)} f_{\alpha}x^{(d)}_{\alpha}$. %and $g_i^{(j)}=\sum_{\alpha=1}^{C(k_0,n)} \big(g_i^{(j)}\big)_{\alpha}x^{(d)}_{\alpha}$.

 We say that a real polynomial $f$ is sums-of-squares (cf. \cite{Lasserre}) if there exist real polynomials $f_j$, $j=1,\ldots,r$, such that $f=\sum_{j=1}^rf_j^2$. The set consisting of all sum of squares real polynomials in the variable $x$ is denoted by $\Sigma^2[x]$. Moreover, the set consisting of all sum of squares real polynomials with degree at most $d$ is denoted by $\Sigma^2_d[x]$.  For a polynomial $f$, we use ${\rm deg}f$ to denote the degree of $f$. Let $l=d/2$. Then, $f$ is a sum-of-squares polynomial if and only if there exists a  positive semi-definite symmetric matrix $W \in S_+^{s(l,n)}$ such that
\begin{equation}\label{eq:useful}
%\bigg(f_1+\sum\limits_{i=1}^{m}\sum_{j=0}^{s_i} w_i^{(j)} \beta_i^{(j)}-\mu\bigg)+ \bigg(v+\sum\limits_{i=1}^{m}\sum_{j=0}^{s_i} w_i^{(j)}\bigg)^Tx+  \sum_{\alpha=n+2}^{C(d,n)} f_{\alpha}x^{(d)}_{\alpha}=
f(x)=(x^{(l)})^TW x^{(l)},
\end{equation}
where $x^{(l)}=(1,x_1,x_2,\ldots,x_n,x_1^2,x_1x_2,\ldots,x_2^2,\ldots,x_n^2,\ldots,x_1^{l},\ldots,x_n^l)^T$. For each $1 \le \alpha \le s(d,n)$, we denote $i(\alpha)=(i_1(\alpha),\ldots,i_n(\alpha)) \in (\mathbb{N} \cup \{0\})^n$ to be the multi-index such that
 $$x^{(d)}_{\alpha}=x^{i(\alpha)}:=x_1^{i_1(\alpha)}\ldots x_n^{i_n(\alpha)}.$$
 Then, by comparing the coefficients in (\ref{eq:useful}), we have the following linear matrix inequality characterization of a sum-of-squares polynomial.
\begin{lemma}\label{th:sos} Let $d$ be an even number.
For a polynomial $f$ on $\mathbb{R}^n$ with degree at most $d$, $f$ is a sum-of-squares polynomial if and only if the following linear matrix inequality problem has a solution
 \begin{eqnarray*}
\left\{ \begin{array}{l}
   %\bigg(v+\sum\limits_{i=1}^{m}\sum_{j=0}^{s_i} w_i^{(j)}\bigg)_{\alpha-1}=W_{1,\alpha}=W_{\alpha,1}, \ \ \ 2 \le \alpha \le n+1 \\
W \in S_+^{s(l,n)} \\
 \displaystyle f_{\alpha}=\sum_{1 \le \beta,\gamma \le s(l,n), i(\beta)+i(\gamma)=i(\alpha)} W_{\beta,\gamma}, \ 1  \le \alpha \le s(d,n),\ l=d/2.
        \end{array}\right.
\end{eqnarray*}
\end{lemma}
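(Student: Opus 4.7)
The plan is to chain two equivalences: (a) $f$ is a sum of squares if and only if it admits a Gram-matrix representation of the form in equation (\ref{eq:useful}), and (b) such a Gram representation exists if and only if the displayed linear system in $W$ is feasible. Part (b) is pure coefficient matching against the canonical basis $x^{(l)}$, and part (a) is the standard Gram/Cholesky correspondence for quadratic forms in the basis $x^{(l)}$.

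For the forward direction of (a), I would start from an SOS decomposition $f=\sum_{j=1}^r f_j^2$. Since $\deg f\le d=2l$, each $f_j$ may be taken of degree at most $l$, so one can write $f_j(x)=c_j^T x^{(l)}$ for some coefficient vector $c_j\in\R^{s(l,n)}$. Summing yields
\[
f(x)=\sum_{j=1}^r (c_j^T x^{(l)})^2 = (x^{(l)})^T \Bigl(\sum_{j=1}^r c_j c_j^T\Bigr) x^{(l)},
\]
and $W:=\sum_j c_j c_j^T$ is a positive semidefinite element of $S^{s(l,n)}_+$. Conversely, given any $W\in S^{s(l,n)}_+$ with $f(x)=(x^{(l)})^T W x^{(l)}$, factor $W=V^T V$; writing $v_j$ for the rows of $V$ produces $f(x)=\sum_j (v_j^T x^{(l)})^2$, which is an SOS decomposition. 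This establishes (a).

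For (b), I would expand the quadratic form monomial by monomial. Since $x^{(l)}_\beta=x^{i(\beta)}$ for each index $\beta$, we have
\[
(x^{(l)})^T W x^{(l)}=\sum_{\beta,\gamma=1}^{s(l,n)} W_{\beta,\gamma}\, x^{i(\beta)+i(\gamma)}.
\]
Grouping the right-hand side by the exponent vector $i(\alpha)$ (with $\alpha$ ranging over the basis of $\mathbb{R}_d[x_1,\dots,x_n]$, since any sum $i(\beta)+i(\gamma)$ with $|\beta|,|\gamma|\le l$ has total degree at most $d$), the coefficient of the monomial $x^{i(\alpha)}$ on the right is exactly $\sum_{i(\beta)+i(\gamma)=i(\alpha)} W_{\beta,\gamma}$. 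Since the monomials $\{x^{i(\alpha)}\}$ are linearly independent, equating with $f(x)=\sum_{\alpha} f_\alpha x^{i(\alpha)}$ is equivalent to the stated system of linear equations in the entries of $W$.

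The combination of (a) and (b) gives the lemma. No step is genuinely hard: the only item that deserves care is the justification that in the forward implication of (a) one may truncate each SOS summand to degree $\le l$, which is immediate from $\deg f \le 2l$ and the fact that the top-degree coefficients of $\sum_j f_j^2$ come from the squares of the top-degree parts of the $f_j$ (so if some $f_j$ had degree $>l$, its leading square would contribute a nonzero coefficient to a degree $>d$ monomial of $f$, a contradiction). Other than this small bookkeeping point, the argument is routine algebra.
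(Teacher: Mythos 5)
Your argument is correct and is exactly the route the paper takes implicitly: the paper simply asserts the standard Gram-matrix correspondence in (\ref{eq:useful}) and says the lemma follows ``by comparing the coefficients,'' which is precisely your steps (a) and (b) written out. Your care with the degree truncation (that each $f_j$ may be taken of degree at most $l$, since the leading squares cannot cancel in a sum of squares of real polynomials) is the one point the paper glosses over, and you handle it correctly.
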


 We now recall  the definition of SOS-convex polynomial. The notion of SOS-convex polynomial was first proposed in \cite{HN10} and further developed in \cite{Parrilo}. Here, for convenience of our discussion, we follow the definition used in \cite{Parrilo}.

\begin{definition}[{\bf SOS-Convex Polynomials} {\cite{HN10}}]
A real polynomial $f$ on $\mathbb{R}^n$ is called \textit{SOS-convex} if
the polynomial $F: (x,y) \mapsto f(x)-f(y)-\nabla f(y)^T(x-y)$ is a sums-of-squares
polynomial on $\mathbb{R}^n \times \mathbb{R}^n$.
\end{definition}

The significance of the class of SOS-convex polynomials is that checking whether a polynomial is SOS-convex is equivalent to solving a semi-definite programming problem (SDP)  which can be done in
polynomial time; while checking a polynomial is convex or not is, in general, an NP-hard problem \cite{HN10,Parrilo}. Moreover, another important fact is that, for SOS-convex polynomial program, an exact SDP relaxation
holds under the usual strict feasibility condition. In contrast, solving a convex polynomial program, is again, in general, an NP hard problem \cite{Parrilo}.

Clearly,
a SOS-convex polynomial is convex. However, the converse is not true, that is, there exists a convex polynomial which is not SOS-convex \cite{Parrilo}. The sum of two SOS-convex polynomials and nonnegative scalar multiplication of an SOS-convex polynomial are still SOS convex polynomials. 
It is known that any convex quadratic function and any convex separable polynomial is an SOS-convex polynomial \cite{jl1}. Moreover, an SOS-convex polynomial can be non-quadratic and non-separable. For instance, $f(x)=x_1^8+x_1^2+x_1x_2+x_2^2$ is a SOS-convex polynomial  which is  non-quadratic and non-separable.

The following existence result for  solutions of a convex polynomial optimization problem will also be useful for our later analysis.
\begin{lemma}[{\bf Solution Existence of Convex Polynomial Programs} {\cite[Theorem 3]{convex_polynomial}}]
\label{minattain}
Let $f_0,f_1,\ldots,f_m$ be convex polynomials on $\mathbb{R}^n$ and let $C:=\left\{x \in \mathbb{R}^n : f_i(x) \leq 0, i=1,\ldots,m\right\}$ be nonempty. If $\inf\limits_{x\in C}f_0(x)>-\infty$  then  $\operatorname{argmin}\limits_{x\in C}f_0(x) \neq \emptyset$.
\end{lemma}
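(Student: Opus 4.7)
The plan is to combine a minimizing-sequence reduction with a recession-cone analysis adapted to polynomials, and then to induct on dimension. Let $v := \inf_{x \in C} f_0(x) > -\infty$ and choose $(x_k) \subset C$ with $f_0(x_k) \to v$. If the sequence $(x_k)$ is bounded, a convergent subsequence has limit $x^* \in C$ (each $f_i$ is continuous, so $C$ is closed) with $f_0(x^*) = v$, and $x^*$ is a minimizer.

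The nontrivial case is $\|x_k\| \to \infty$. After extraction, the normalized directions $d_k := x_k/\|x_k\|$ converge to a unit vector $d$. For each $y \in C$ and each $t \ge 0$, the identity $y + t d_k = (1 - t/\|x_k\|) y + (t/\|x_k\|) x_k$ holds for large $k$ and lies in $C$ by convexity; passing to the limit shows that $d$ is a recession direction of $C$. For any $y \in C$, the map $h(t) := f_0(y + td)$ is then a univariate convex polynomial bounded below by $v$ on $[0, \infty)$. Convexity forces $h$ to be either constant, or linear with nonnegative slope, or of even degree with positive leading coefficient (hence coercive). The fact that $f_0(x_k) \to v$ with $\|x_k\| \to \infty$ in direction $d$ then pins $h$ down to the constant case: any strict monotonicity or coercive growth along $d$ would make $f_0$ values on the minimizing sequence eventually exceed $v$ by a fixed positive amount.

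To conclude I would induct on $n$. The base case $n = 1$ is the univariate trichotomy above. For the inductive step, fix $y_0 \in C$ and restrict to the affine hyperplane $H := y_0 + d^{\perp}$: the restricted polynomials $f_i|_H$ on a copy of $\R^{n-1}$ with feasible set $C \cap H$ form an instance of the same hypothesis, and the constancy of $f_0$ along $d$ gives $\inf_{C \cap H} f_0 = v$, so a minimizer produced by the inductive hypothesis is also a minimizer of $f_0$ on $C$. The principal obstacle is justifying this reduction rigorously, namely, showing that constancy of $f_0$ along $d$ propagates \emph{uniformly} across $C$ and that every point of $C$ can be translated by a multiple of $d$ into $H$ while remaining in $C$. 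This is the step where the polynomial (and hence semi-algebraic) structure of the constraint functions $f_i$ plays an essential role beyond mere convexity, since general closed convex sets need not admit such two-sided recession behaviour along a direction of unboundedness.
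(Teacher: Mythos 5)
The paper offers no proof of this lemma: it is imported verbatim as Theorem~3 of Belousov and Klatte \cite{convex_polynomial}, so there is no internal argument to compare yours against. Judged on its own terms, your proposal contains a genuine gap, and it is precisely the step you yourself flag as ``the principal obstacle'' --- you have located the difficulty correctly but not overcome it, and the reduction as you state it is in fact false.

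Concretely: your inductive step requires that every point of $C$ can be translated by a multiple of $d$ into the hyperplane $H$ while remaining in $C$, so that $\inf_{C\cap H}f_0=v$. This fails even for convex \emph{polynomial} constraints: take $C=\{x\in\R^2: x_1^2-x_2\le 0\}$ and $d=(0,1)$; points high above the parabola cannot be slid down along $-d$ to a fixed level of $x_2$ without leaving $C$. More importantly, nothing in your argument ever uses the polynomiality of the constraint functions $f_i$ for $i\ge 1$ --- you use only that $C$ is closed and convex and that $d$ is a recession direction --- so if your outline were complete it would prove attainment over arbitrary closed convex sets, which is false (consider $f_0(x)=x_1$ on $C=\{x: x_1>0,\ x_1x_2\ge 1\}$, a closed convex set on which the infimum $0$ is not attained; its defining inequality is of course not a convex polynomial). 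The missing ingredient is the elementary but decisive fact that a univariate convex polynomial bounded \emph{above} on a half-line must be affine with nonpositive slope there; applied to $t\mapsto f_i(y+td)\le 0$ for $y\in C$ and $d$ a recession direction, it shows each constraint is affine and non-increasing along $d$, which is exactly what rules out hyperbola-type behaviour and what the Belousov--Klatte induction is built on. (A secondary, fixable issue: your claim that $h_y(t)=f_0(y+td)$ must be constant needs the gradient inequality $\nabla f_0(z)^{T}d\le 0$ applied at every $z=y+sd\in C$ to exclude cases like $h_y(t)=(t-1)^2$, which is convex, bounded below, non-monotone on $[0,\infty)$, and not excluded by the reasoning you give.)
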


\section{SOS-convex semi-algebraic functions}

We begin this section with introducing the notion of SOS-convex semi-algebraic functions. The class of SOS-convex semi-algebraic functions is  a subclass of the class of locally Lipschitz nonsmooth convex functions, and  includes  SOS-convex polynomials.

\begin{definition}{\bf (SOS-convex semi-algebraic functions) }
We say $f:\R^n\rightarrow \R$  is an SOS-convex semi-algebraic function on $\mathbb{R}^n$ if it admits a representation
\begin{equation}\label{rep} \displaystyle f(x)=\sup_{y \in \Omega} \{h_0(x)+\sum_{j=1}^m y_j h_j(x)\},\, m \in \mathbb{N},\end{equation} where
\begin{itemize}
\item[{\rm (1)}] each $h_j$, $j=0,1,\ldots,m$, is a polynomial and for each $y \in \Omega$, $\displaystyle h_0+\sum_{j=1}^m y_j h_j$ is a SOS-convex polynomial on $\mathbb{R}^n$;
\item[{\rm (2)}] $\Omega$ is a nonempty compact semi-definite program representable set given by
\begin{equation}\label{eq:1}
\Omega=\{y \in \mathbb{R}^m: \exists \, z  \in \mathbb{R}^p \mbox{ s.t. } A_0+\sum_{j=1}^m y_j A_j+ \sum_{l=1}^p z_l B_l \succeq 0\},
\end{equation}
\end{itemize}
for some  $p \in \mathbb{N},$ $A_j$ and $B_l,$ $j=0,1,...,m,$ $l=1,...,p,$ being $(t\times t)$-symmetric matrices with some  $t\in \mathbb N.$

 \noindent Moreover, the maximum of the degree of the polynomial $h_j$, $j=1,\ldots,m,$ is said to be  the degree of the  SOS-convex semi-algebraic function $f$ with respect to  the representation~\eqref{rep}.
\end{definition}

The class of SOS-convex semi-algebraic functions contains many common nonsmooth convex functions. Below, we provide some typical examples.
\begin{example}{\bf (Examples of SOS-convex semi-algebraic functions)}\label{ex3.1}
\begin{itemize}
\item[{\rm (1)}] Let $f(x)=\max_{1 \le i \le m}f_i(x)$ where each $f_i$, $i=1,\ldots,m$, is an SOS-convex polynomial. Note that
$f(x)=\sup_{y \in \Delta} g(x,y)$ where $\Delta$ is the simplex in $\mathbb{R}^m$ given by $\Delta=\{y: y_i \ge 0, \sum_{i=1}^m y_i=1\}$
 and $g(x,y)=\sum_{i=1}^m y_i f_i(x)$. Then, we see that $f$ is an SOS-convex semi-algebraic function.
\item[{\rm (2)}] Let $f(x)=\|x\|$.  Then, $f$ is an SOS-convex semi-algebraic function. To see this, we only need to note that
\[
\|x\|=\sup_{\|(y_1,\ldots,y_n)\| \le 1} \sum_{i=1}^n x_iy_i,
\]
and the unit ball defined by $\|\cdot\|$ is a compact semi-definite program representable set. 
More generally, $f(x)=\|x\|_p:=\big(\sum_{i=1}^n |x_i|^p\big)^{\frac{1}{p}}$ with $p=\frac{s}{s-1}$ and $s$ being an even positive integer,  is an SOS-convex semi-algebraic function. 
To see this, we only need to  note that  $$\|x\|_p=\sup_{\|(y_1,\ldots,y_n)\|_s \le 1} \sum_{i=1}^n x_iy_i.$$ and the set $\{y \in \mathbb{R}^n: \|y\|_s \le 1\}=\{y \in \mathbb{R}^n: 
\sum_{i=1}^n y_i^s \le 1\}$ is described by an SOS-convex polynomial inequality (as $s$ is even) and so, is a  compact semi-definite program representable set \cite{HN10}. 
\item[{\rm (3)}] Identify the $(n \times n)$ symmetric matrices space $S^n$ with the trace inner product ${\rm Tr}(AB)=\sum_{ij} A_{ij}B_{ij}$ as $\mathbb{R}^{n(n+1)/2}$ with the usual inner product. Let $f:S^n \rightarrow \mathbb{R}$ be defined by $f(X)=\lambda_{\max}(X)$ where $\lambda_{\max}$ is the maximum eigenvalue. Then, $f$ is an SOS-convex semi-algebraic function on $S^n$. To see this, we only need to notice that
    \[
    \lambda_{\max}(X)=\sup\{{\rm Tr}(XY): Y \in S^n, {\rm Tr}(Y)=1, Y \succeq 0\}
    \]
    and the set $\{Y \in S^n: {\rm Tr}(Y)=1, Y \succeq 0\}$ is a compact semi-definite program representable set.
\end{itemize}
\end{example}

Next, we see that SOS-convex semi-algebraic functions cover many least squares functions with regularization. To see this, we need the following simple lemma which shows that finite addition preserves SOS-convex semi-algebracity.
\begin{proposition}\label{prop3.1}
Let $f_i$ be SOS-convex semi-algebraic functions on $\mathbb{R}^n$, $i=1,\ldots,q$. Then, $\sum_{i=1}^q f_i$ is an
SOS-convex semi-algebraic function on $\mathbb{R}^n$. %Moreover, let $f$ be an SOS-convex semi-algebraic function on $\mathbb{R}^n$ and $A$ be a linear map from $\mathbb{R}^q$ to $\mathbb{R}^n$. Then, $x \mapsto f(Ax)$ is an SOS-convex semi-algebraic function on $\mathbb{R}^q$.
\end{proposition}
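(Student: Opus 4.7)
The plan is to construct the required representation of $\sum_{i=1}^q f_i$ explicitly by taking the product of the individual representations. For each $i$, let
\[
f_i(x)=\sup_{y^{(i)}\in\Omega_i}\Bigl\{h_0^{(i)}(x)+\sum_{j=1}^{m_i}y_j^{(i)}h_j^{(i)}(x)\Bigr\}
\]
be the given SOS-convex semi-algebraic representation, with $\Omega_i$ described by symmetric matrices $A_0^{(i)},A_j^{(i)},B_l^{(i)}$ of some size $t_i\times t_i$. Since the supremum of a sum over a product set equals the sum of the suprema, I would set $m:=m_1+\cdots+m_q$, concatenate the decision variables $y=(y^{(1)},\dots,y^{(q)})$ (and slack variables $z=(z^{(1)},\dots,z^{(q)})$), and write
\[
\sum_{i=1}^q f_i(x)=\sup_{y\in\Omega}\Bigl\{H_0(x)+\sum_{i=1}^q\sum_{j=1}^{m_i}y_j^{(i)}h_j^{(i)}(x)\Bigr\},
\]
where $H_0:=\sum_{i=1}^q h_0^{(i)}$ and $\Omega:=\Omega_1\times\cdots\times\Omega_q$.

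Next I would verify the two conditions of Definition~3.1 for this representation. For condition~(1), for any fixed $y\in\Omega$ the inner polynomial is $\sum_{i=1}^q\bigl(h_0^{(i)}+\sum_{j=1}^{m_i}y_j^{(i)}h_j^{(i)}\bigr)$, a sum of $q$ SOS-convex polynomials (one from each $\Omega_i$-family), and by the closure of SOS-convex polynomials under addition (noted in Section~2, and immediate from the fact that sums of sums-of-squares are sums-of-squares applied to the Hessian-type certificate $F(x,y)=f(x)-f(y)-\nabla f(y)^\top(x-y)$), this sum is again SOS-convex. For condition~(2), compactness of $\Omega$ follows because a finite product of compact sets is compact, and SDP-representability follows by the standard block-diagonal construction:
\[
\Omega=\Bigl\{y\in\R^m:\exists z\in\R^{p_1+\cdots+p_q}\text{ s.t. }\operatorname{diag}\bigl(M_1(y^{(1)},z^{(1)}),\dots,M_q(y^{(q)},z^{(q)})\bigr)\succeq 0\Bigr\},
\]
where $M_i(y^{(i)},z^{(i)})=A_0^{(i)}+\sum_j y_j^{(i)}A_j^{(i)}+\sum_l z_l^{(i)}B_l^{(i)}$; block diagonal positive semidefiniteness decouples into the individual constraints, so this correctly describes $\Omega_1\times\cdots\times\Omega_q$ as a spectrahedral projection.

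There is essentially no hard step here: the argument is a bookkeeping assembly of two well-known closure properties (sum of SOS-convex polynomials is SOS-convex; finite product of SDP-representable sets is SDP-representable via block diagonalization). The only minor item worth stating explicitly in the write-up is the identity $\sup_{y\in\prod_i\Omega_i}\sum_i g_i(x,y^{(i)})=\sum_i\sup_{y^{(i)}\in\Omega_i}g_i(x,y^{(i)})$, which holds because the variables separate across the product.
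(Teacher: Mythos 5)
Your proposal is correct and follows essentially the same route as the paper: write each $f_i$ as a supremum over its $\Omega_i$, use the separability of the supremum over the product set, and observe that $\Omega_1\times\cdots\times\Omega_q$ is again a compact SDP-representable set (the paper reduces to $q=2$ and leaves the block-diagonal representation implicit, whereas you spell both out). No substantive difference.
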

\begin{proof}
To see the conclusion, it suffices to show the case where $q=2$. We first show that $f_1+f_2$ is an SOS-convex semi-algebraic function.
As $f_i$, $i=1,2$ are SOS-convex semi-algebraic functions, $f_i(x)=\sup_{y^i \in \Omega_i}\{h_0^i(x)+\sum_{j=1}^{m_i} y_j^i h_j^i(x)\}$,  where $m_i \in \mathbb{N}$, $h_l^i$, $l=0,1,\ldots,m$ are SOS-convex polynomials and
$\Omega_i$ is a compact semi-definite program representable sets given by
\[
\Omega_i=\{y^i \in \mathbb{R}^{m_i}: \exists \, z^i \in \mathbb{R}^{p_i} \mbox{ s.t. } A_0^i+\sum_{j=1}^{m_i} y_j^i A_j^i+ \sum_{l=1}^{p_i} z_l^i B_l^i \succeq 0\}.
\]
%$f_2(x)=\sup_{w \in \Omega_2}\{\phi_0(x)+\sum_{j=1}^m w_j \phi_j(x)\}$,  where $\phi_l$, $l=0,1,\ldots,$
%$\Omega_2$ is a compact semi-definite program representable sets given by
%\[
%\Omega_2=\{w \in \mathbb{R}^{m_2}: \exists \, v \in \mathbb{R}^{p_2} \mbox{ s.t. } C_0+\sum_{j=1}^m w_j C_j+ \sum_{l=1}^{p_2} v_l D_l \succeq 0\}.
%\]
Then,
\[
f_1(x)+f_2(x)=\sup_{(y^1,y^2) \in \Omega_1\times \Omega_2}\left\{h_0^1(x)+h_0^2(x)+\sum_{j=1}^{m_1} y_j^1 h_j^1(x)+\sum_{j=1}^{m_2} y_j^2 h_j^2(x)\right\}.
\]
Note that $\Omega_1 \times \Omega_2$ is also a compact semi-definite program representable set. Thus, $f_1+f_2$ is also an SOS-convex semi-algebraic function.
%
%The second assertion follows directly from the definition.
\end{proof}

\begin{example}{\bf (Further examples: least squares problems with regularization)}
Let $A \in \mathbb{R}^{m \times n}$ and $b \in \mathbb{R}^m$.  From the preceding proposition, we see that the following functions  which arises in sparse optimization are SOS-convex semi-algebraic:
\begin{itemize}
\item[{\rm (1)}] The least squares function with $\ell_1$-regularization $f(x)=\|Ax-b\|^2+ \mu \|x\|_1$ where $\mu >0$. Note that since $\|x\|_1:=|x_1|+|x_2|+...+|x_n|$ and $|x_i|=\max\{x_i,-x_i\},$ $i=1,2,...,n,$ it follows from Example~\ref{ex3.1} and Proposition~\ref{prop3.1} that  $\|\cdot\|_1$ is an SOS-convex semi-algebraic function, while the function $x\mapsto\|Ax-b\|^2$ is a convex quadratic function and thus is SOS-convex semi-algebraic.
\item[{\rm (2)}] The least squares function with elastic net regularization \cite{Zou} $f(x)=\|Ax-b\|^2+ \mu_1 \|x\|_1+ \mu_2 \|x\|^2$ where $\mu_1,\mu_2>0$.
\end{itemize}
\end{example}

\section{Exact SDP relaxation for SOS-convex semi-algebraic programs}

In this section, we show that an SOS-convex semi-algebraic program admits an exact SDP relaxation in the sense that the optimal value of the SDP relaxation problem equals the optimal value of the underlying SOS-convex semi-algebraic program. Moreover, a solution for the SOS-convex semi-algebraic program can be recovered from its SDP relaxation, under strict feasibility assumptions.

Consider the following SOS-convex semi-algebraic program:
\begin{eqnarray*}
(P) & \min & f_0(x) \\
& \mbox{ subject to } & f_i(x) \le 0, \ i=1,\ldots,s,
\end{eqnarray*}
where each $f_i$, $i=0,1,\ldots,s$, is an SOS-convex semi-algebraic function  in the form
$$\displaystyle f_i(x)=\sup_{(y_1^i,\ldots,y_m^i) \in \Omega_i} \{h_0^i(x)+\sum_{j=1}^m y_j^i h_j^i(x)\}, \ m \in \mathbb{N},$$ such that
\begin{itemize}
%\item[{\rm (1)}]  $h$ is a polynomial on $\mathbb{R}^n \times \mathbb{R}^m$;
\item[{\rm (1)}] each $h_j^i$ is a polynomial with degree at most $d$,  and for each $y^i=(y^i_1,...,y^i_m)\in \Omega_i$, the function $\displaystyle h_0^i+\sum_{j=1}^m y^i_j h_j^i$ is an SOS-convex polynomial on $\mathbb{R}^n$;
%\item[{\rm (2)}] for each $x \in \mathbb{R}^n$, $h(x,\cdot)$ is a concave function;
\item[{\rm (2)}] $\Omega_i$, $i=0,1,\ldots,s$, is a nonempty compact semi-definite program representable set given by
\begin{equation*}
\Omega_i=\big\{(y_1^i,\ldots,y_m^i) \in \mathbb{R}^m: \exists  z^i=(z^i_1,...,z^i_{p_i})  \in \mathbb{R}^{p_i} \mbox{ s.t. } A_0^i+\sum_{j=1}^m y_j^i A_j^i+ \sum_{l=1}^{p_i} z_l^i B_l^i \succeq 0\big\},
\end{equation*}
for some $p_i \in \mathbb{N}.$
\end{itemize}
{\em Without loss of generality, throughout this paper, we assume that $d$ is an even number. }

We now introduce  a  relaxation problem for problem (P) as follows
\begin{eqnarray*}
({SDP}) \ \ \ \sup_{\substack{\lambda_0^i \ge 0, (\lambda_1^i,...,\lambda_m^i) \in \mathbb{R}^m \\
 z_l^i \in \mathbb{R}, \mu \in \mathbb{R}}} \big\{\mu&:& h_0^0+\sum_{j=1}^m \lambda_j^0  h_j^0+\sum_{i=1}^s \left(\lambda_0^i h_0^i + \sum_{j=1}^m \lambda_j^i h_j^i\right) -\mu \in \Sigma^2_d[x], \\
& & \ \ \ \ \ \ \ \ \ \ \ \ \ \ \ \ \ \ \ \ \  A_0^0+\sum_{j=1}^m\lambda_j^0 A_j^0+\sum_{l=1}^{p_0} z_l^0 B_l^0 \succeq 0,\\
& & \ \ \ \ \ \ \ \ \ \ \ \ \ \ \ \ \ \ \ \ \  \lambda_0^i A_0^i+\sum_{j=1}^m\lambda_j^i A_j^i+\sum_{l=1}^{p_i} z_l^i B_l^i \succeq 0, \, i=1,\ldots,s\big\}. \end{eqnarray*}
We note (from Lemma \ref{th:sos}) that $(SDP)$ can be equivalently rewritten as the following semi-definite programming problem:
{\small \begin{eqnarray*}
& & \sup_{\substack{\lambda_0^i \ge 0,   (\lambda_1^i,...,\lambda_m^i) \in \mathbb{R}^m\\
 z_l^i \in \mathbb{R}, \mu \in \mathbb{R},
W \in S^{s(d/2,n)}}} \{\mu:   (h_0^0)_1+\sum_{j=1}^m  \lambda_j^0 (h_j^0)_1+\sum\limits_{i=1}^{s}\left(\lambda_0^i (h_0^i)_1 + \sum_{j=1}^m \lambda_j^i (h_j^i)_1\right)-\mu =W_{1,1}, \nonumber \\
& & \ \ \ \ \ \ \ \ \ \ \ \ \ \ \ \ \ \ \ \ \  (h_0^0)_\alpha+\sum_{j=1}^m \lambda_j^0 (h_j^0)_\alpha+\sum\limits_{i=1}^{s}\left(\lambda_0^i (h_0^i)_\alpha + \sum_{j=1}^m \lambda_j^i (h_j^i)_\alpha\right)=\sum_{\substack{ 1\le \beta,\gamma \le s(d/2,n) \\i(\beta)+i(\gamma)=i(\alpha)}}W_{\beta,\gamma} \, ,\ 2  \le \alpha \le s(d,n)\nonumber \\
%f+\sum_{i=1}^m \left(\lambda_i^0 g_i^0 + \sum_{j=1}^s \lambda_i^j g_i^j\right) -\mu \in \Sigma^2_d, \\
& & \ \ \ \ \ \ \ \ \ \ \ \ \ \ \ \ \ \ \ \ \  W \succeq 0,  \quad  \,  A_0^0+\sum_{j=1}^m\lambda_j^0 A_j^0+\sum_{l=1}^{p_0} z_l^0 B_l^0 \succeq 0,\\
& & \ \ \ \ \ \ \ \ \ \ \ \ \ \ \ \ \ \ \ \ \ \lambda_0^i A_0^i+\sum_{j=1}^m\lambda_j^i A_j^i+\sum_{l=1}^{p_i} z_l^i B_l^i \succeq 0, \, i= 1,\ldots,s\}. \nonumber
\end{eqnarray*}}

Next, we show that an exact SDP relaxation holds between (P) and $(SDP)$ in the sense that their optimal values are the same.
We start with a simple property for a bounded set which describes by linear matrix inequalities.
\begin{lemma} \label{lemma:1}
Let $\mathcal{U}$ be a nonempty compact set with the form $\mathcal{U}=\{(u_1,\ldots,u_m) \in \mathbb{R}^m: \exists z \in \mathbb{R}^p \mbox{
 such that } A_0+\sum_{j=1}^mu_j A_j+ \sum_{l=1}^p z_l B_l \succeq 0\}$ where $A_j,B_l \in S^{q}$.
 Let $(\lambda_0,\ldots,\lambda_m) \in \mathbb{R}^{m+1}$  and  $\lambda_0 A_0+\sum_{j=1}^m\lambda_j A_j+ \sum_{l=1}^p v_l B_l \succeq 0$ for some
 $(v_1,\ldots,v_p) \in \mathbb{R}^p$.  Then, the following
 implication holds: \begin{equation}\label{eq:trick}
\lambda_0=0 \ \Rightarrow \ \lambda_j=0 \  \mbox{ for all } j=1,\ldots,m.
                                                         \end{equation}
 %\item[{\rm (3)}] $\mathcal{U}^{\infty}=\{0\}$, where $\mathcal{U}^\infty$ is the recession cone of $\mathcal{U}$.
%\end{itemize}
                                                         \end{lemma}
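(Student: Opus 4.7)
The plan is to apply a recession-cone argument: specialized at $\lambda_0 = 0$, the hypothesis says that $(\lambda_1,\ldots,\lambda_m)$, together with the auxiliary vector $(v_1,\ldots,v_p)$ in the slack coordinates, defines a recession direction for $\mathcal{U}$, and the compactness of $\mathcal{U}$ then forces this direction to vanish.

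Concretely, I first fix any $u^* = (u^*_1,\ldots,u^*_m) \in \mathcal{U}$ together with a witnessing slack $z^* \in \mathbb{R}^p$ satisfying
\[ A_0 + \sum_{j=1}^m u^*_j A_j + \sum_{l=1}^p z^*_l B_l \succeq 0; \]
such a pair exists because $\mathcal{U}$ is nonempty. Substituting $\lambda_0 = 0$ into the standing hypothesis yields
\[ \sum_{j=1}^m \lambda_j A_j + \sum_{l=1}^p v_l B_l \succeq 0. \]
Since $S^q_+$ is a convex cone, adding $t$ times the second inequality to the first for any $t \ge 0$ preserves positive semidefiniteness, giving
\[ A_0 + \sum_{j=1}^m (u^*_j + t\lambda_j) A_j + \sum_{l=1}^p (z^*_l + t v_l) B_l \succeq 0. \]
Reading off the definition of $\mathcal{U}$ with slack $z^* + tv$, this shows that $u^* + t(\lambda_1,\ldots,\lambda_m) \in \mathcal{U}$ for every $t \ge 0$.

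Finally, if some $\lambda_j$ were nonzero, then $\{u^* + t(\lambda_1,\ldots,\lambda_m) : t \ge 0\}$ would be an unbounded ray contained in $\mathcal{U}$, contradicting its compactness. Hence $\lambda_1 = \cdots = \lambda_m = 0$, which is exactly the implication \eqref{eq:trick}. The only mildly subtle step, and the main thing to watch, is the simultaneous translation of the slack vector $z^*$ by $tv$: without exhibiting a matching slack one cannot certify membership in $\mathcal{U}$, so the argument hinges on having both linear matrix inequalities available in exactly the form prescribed by the definition of $\mathcal{U}$.
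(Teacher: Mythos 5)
Your proof is correct and follows essentially the same route as the paper's: substitute $\lambda_0=0$, pick a point of $\mathcal{U}$ with its witnessing slack, add $t$ times the homogeneous LMI to produce the ray $u^*+t(\lambda_1,\ldots,\lambda_m)\in\mathcal{U}$ with slack $z^*+tv$, and contradict boundedness. The care you take with translating the slack vector is exactly the step the paper also performs.
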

\begin{proof}
% We now observe that
%  for each $i=1,\ldots,m$, the following implicaton holds \begin{equation}\label{eq:trick}
% \lambda_i^0=0 \ \Rightarrow \ \lambda_i^j=0, \mbox{ for all } j=1,\ldots,s.
%                                                          \end{equation}
%  To see this
  Let $(\lambda_0,\ldots,\lambda_m) \in \mathbb{R}^{m+1}$  and $\lambda_0 A_0+\sum_{j=1}^m\lambda_j A_j+ \sum_{l=1}^p v_l B_l \succeq 0$ for some
 $(v_1,\ldots,v_p) \in \mathbb{R}^p$.  We proceed by the method of contradiction. Suppose that $\lambda_0=0$ and  there exists  $j_0 \in \{1,\ldots,m\}$ with  $\lambda_{j_0} \neq 0.$ This means  that
\[
\sum_{j=1}^m\lambda_j A_j+ \sum_{l=1}^p v_l B_l \succeq 0 \mbox{ and } (\lambda_1,\ldots,\lambda_m)  \neq 0_{\mathbb{R}^m}.
\]
Now take $\hat{u}=(\hat u_1,\ldots,\hat u_m) \in \mathcal{U}$. Then, we have
 $A_0+\sum_{j=1}^m\hat{u}_j A_j+ \sum_{l=1}^p \hat{v}_l B_l \succeq 0$ for some $(\hat{v}_1,\ldots,\hat{v}_p)\in \R^p$,
and so,
\[
 A_0+\sum_{j=1}^m (\hat{u}_j+ t \lambda_j) A_j + \sum_{l=1}^p (\hat{v}_l+t v_l) B_l\succeq 0\  \mbox{ for all } t \ge 0.
\]
The latter implies that
\[
(\hat u_1,\ldots,\hat u_m)+t(\lambda_1,\ldots,\lambda_m) \in \mathcal{U}\  \mbox{ for all } t \ge 0,
\]
 which contradicts the boundedness of $\mathcal{U}$. Thus, the conclusion follows.
%
% [${\rm (2)} \Rightarrow {\rm (3)}$] Let $v=(v^1,\ldots,v^s) \in \mathcal{U}^{\infty}$ and $\bar u=(\bar u^1,\ldots,\bar u^s) \in \mathcal{U}$. Then, for all $t \ge 0$,
% \[
% A_0+ \sum_{j=1}^s \bar{u}^j A_j + t(\sum_{j=1}^s v^j A_i^j) \succeq 0.
% \]
% This forces that $\displaystyle \sum_{j=1}^s v^j A_i^j \succeq 0$. Then, {\rm (2)} implies that $v=0$, and so, $\mathcal{U}^{\infty}=\{0\}$.
%
% [${\rm (3)} \Leftrightarrow {\rm (1)}$] This equivalence is a classical result in convex analysis.
 \end{proof}

\smallskip

% We now show that an exact SDP relaxation result holds for (RP), in the sense that, the optimal value of the (RP) can be found by solving a single semi-definite programming problem
% under the assumptions that ``$f$ is SOS-convex and, for each fixed $u_i \in \mathcal{U}_i$, $g_i^0+\sum_{j=1}^s u_i^j g_i^j$ is SOS-convex''.
%  We note that this assumption is satisfied when $f$ and $g_i^0$ are SOS-convex, and $g_i^j$, $j=1,\ldots,s$ are affine (for other sufficient condition see
%  Proposition \ref{prop:4.1} later). We also remark that
%  an exact SDP relaxation can fail if we only require $f$ and $g_i^j$, $i=1,\ldots,m$, $j=0,1,\ldots,s$ are SOS-convex, because it has been shown in \cite{BEN09} (see also \cite[Remark 1]{Gold})
% that robust convex quadratic optimization probems under ellipsoidal data uncertainty set are, in general, NP-hard.

We are now ready to state and prove the first main result of this section, showing the exactness of the SDP relaxation for SOS-convex semi-algebraic programs under a strict feasibility condition.

\begin{theorem}{\bf (Exact SDP Relaxation for SOS-convex Semi-algebraic Programs)}\label{th:exact_SDP}
For  problem~$(P),$   suppose  the following  strict feasibility condition holds: there  exists $x_0 \in \mathbb{R}^n$ such that $f_i(x_0)<0$, $i=1,\ldots,s$.
Then, we have \begin{eqnarray*} {\rm val}(P)&=& {\rm val}(SDP),\end{eqnarray*}
where ${\rm val}(P)$ and  ${\rm val}(SDP)$ are the optimal values of problems $(P)$ and $(SDP),$ respectively. 
\end{theorem}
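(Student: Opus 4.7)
The plan is to establish the two inequalities $\mathrm{val}(SDP)\le \mathrm{val}(P)$ and $\mathrm{val}(P)\le \mathrm{val}(SDP)$ separately. Weak duality is immediate: for any feasible $x$ of $(P)$ and any feasible tuple $(\lambda_j^i, z_l^i, \mu)$ of $(SDP)$, evaluating the SOS-certificate at $x$ gives
\[
h_0^0(x)+\sum_{j=1}^m \lambda_j^0 h_j^0(x)+\sum_{i=1}^s \Bigl(\lambda_0^i h_0^i(x)+\sum_{j=1}^m \lambda_j^i h_j^i(x)\Bigr)\ \ge\ \mu.
\]
The LMI on $\lambda^0$ shows $(\lambda_1^0,\dots,\lambda_m^0)\in\Omega_0$, so the first bracket is at most $f_0(x)$. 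For each $i\ge 1$ with $\lambda_0^i>0$, rescaling the corresponding LMI by $1/\lambda_0^i$ shows $(\lambda_j^i/\lambda_0^i)_j\in\Omega_i$, and the $i$-th bracket equals $\lambda_0^i\bigl(h_0^i(x)+\sum_j(\lambda_j^i/\lambda_0^i)h_j^i(x)\bigr)\le \lambda_0^i f_i(x)\le 0$ by feasibility of $x$; if instead $\lambda_0^i=0$, Lemma~\ref{lemma:1} forces every $\lambda_j^i=0$, so the bracket vanishes. Summing yields $\mu\le f_0(x)$, and infimising gives $\mathrm{val}(SDP)\le \mathrm{val}(P)$.

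For the reverse inequality, set $v:=\mathrm{val}(P)$ and assume $v$ is finite (otherwise weak duality already forces both values to equal $-\infty$). The strict feasibility hypothesis is Slater's condition for the continuous convex program $(P)$, so Lagrangian strong duality produces multipliers $\bar\lambda_1,\dots,\bar\lambda_s\ge 0$ with $v=\inf_{x\in\R^n}\{f_0(x)+\sum_{i=1}^s\bar\lambda_i f_i(x)\}$. Using the defining sup-representation of each $f_i$ and factoring the nonnegative constants $\bar\lambda_i$ through each sup, the bracketed function rewrites as $\sup_{y\in K} G(x,y)$, where $K:=\Omega_0\times\cdots\times\Omega_s$ is nonempty, convex and compact, and
\[
G(x,y)\ :=\ \Bigl(h_0^0(x)+\sum_j y_j^0 h_j^0(x)\Bigr)+\sum_{i=1}^s \bar\lambda_i\Bigl(h_0^i(x)+\sum_j y_j^i h_j^i(x)\Bigr)
\]
is continuous, convex in $x$, and linear (hence u.s.c. and quasi-concave) in $y$. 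Sion's minimax theorem then applies with the compact side $K$ carrying the concave variable and yields
\[
v\ =\ \inf_x\sup_{y\in K} G(x,y)\ =\ \sup_{y\in K}\inf_{x\in\R^n} G(x,y).
\]

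For each $y\in K$ with $\inf_x G(x,y)>-\infty$, the polynomial $G(\cdot,y)$ is a nonnegative-coefficient combination of SOS-convex polynomials of degree at most $d$, hence is itself SOS-convex of degree at most $d$; Lemma~\ref{minattain} produces a minimiser $\bar x_y$, and combining the first-order condition $\nabla_x G(\bar x_y,y)=0$ with the SOS-convex defining identity for $G(\cdot,y)$ reduces it to $G(\cdot,y)-\inf_x G(\cdot,y)\in\Sigma^2_d[x]$ --- the exact SOS lower-bound fact for SOS-convex polynomials that the paper invokes. Given $\epsilon>0$, pick $y^*\in K$ with $\inf_x G(x,y^*)\ge v-\epsilon$; I would then assemble a feasible point of $(SDP)$ by setting $\lambda_j^0:=y^{*,0}_j$, $\lambda_0^i:=\bar\lambda_i$, and $\lambda_j^i:=\bar\lambda_i\,y^{*,i}_j$ for $i\ge 1$, and by choosing each $z_l^i$ as the $\bar\lambda_i$-rescaling of a certificate witnessing $y^{*,i}\in\Omega_i$ (with $z^i=0$ when $\bar\lambda_i=0$). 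All three LMIs in $(SDP)$ are then satisfied, the SOS constraint becomes $G(\cdot,y^*)-\mu\in\Sigma^2_d[x]$, and $\mu:=v-\epsilon$ is admissible. Hence $\mathrm{val}(SDP)\ge v-\epsilon$, and letting $\epsilon\downarrow 0$ finishes the proof.

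The main obstacle I anticipate is the minimax step: the $x$-side is unbounded, so one must deploy a form of Sion's theorem in which only the concave variable ranges over a compact set, and check carefully that the linear-in-$y$, convex-in-$x$ structure of $G$ together with the compactness of $K$ indeed suffices. A secondary subtlety is the $\lambda_0^i=0$ case in the weak-duality direction, where the spectrahedral encoding of $\Omega_i$ does not a priori behave well under scaling by zero; Lemma~\ref{lemma:1} is precisely the tool that closes this gap.
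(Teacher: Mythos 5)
Your proposal is correct and follows essentially the same route as the paper's proof: weak duality by rescaling the LMIs and invoking Lemma~\ref{lemma:1} when $\lambda_0^i=0$, and the reverse inequality via Slater-based Lagrangian duality, the convex--concave (Sion) minimax theorem over the compact product $\prod_{i}\Omega_i$, and the exact SOS lower bound for SOS-convex polynomials obtained from Lemma~\ref{minattain} together with the gradient identity at the minimizer. The only cosmetic difference is that you work with an $\epsilon$-optimal $y^*$ and let $\epsilon\downarrow 0$, whereas the paper uses the attainment of the maximum over the compact side (which the minimax theorem provides) and certifies $\mu=r$ exactly.
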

\begin{proof} We first justify that ${\rm val}(P) \ge {\rm val}(SDP).$ Let  $\lambda_0^i \ge 0,$ $(\lambda_1^i,...,\lambda_m^i) \in \mathbb{R}^m,$
 $z_l^i \in \mathbb{R},$  and $\mu \in \mathbb{R},$  be feasible for $(SDP)$. Then, we have
\begin{eqnarray*}
& & h_0^0+\sum_{j=1}^m  \lambda_j^0 h_j^0+\sum_{i=1}^s \left(\lambda_0^i h_0^i + \sum_{j=1}^m \lambda_j^i h_j^i\right) -\mu \in \Sigma^2_d[x], \\
 & & A_0^0+\sum_{j=1}^m\lambda_j^0 A_j^0+\sum_{l=1}^{p_0} z_l^0 B_l^0 \succeq 0,\\
& &   \lambda_0^i A_0^i+\sum_{j=1}^m\lambda_j^i A_j^i+\sum_{l=1}^{p_i} z_l^i B_l^i \succeq 0, \, i=1,\ldots,s .
\end{eqnarray*}
Take any  $x \in \mathbb{R}^n$ with $f_i(x) \le 0,$ $i=1,...,s$. We want to show $f_0(x) \ge \mu$.  For each $i=1,\ldots,s,$ pick  $\bar y_i=(\bar y_1^i,\ldots,\bar y_m^i) \in \Omega_i.$ By the definition of $\Omega_i,$   there exist $\bar z^i  \in \mathbb{R}^{p_i}$ such that
$A_0^i+\sum_{j=1}^m \bar y_j^i A_j^i+ \sum_{l=1}^{p_i} \bar z_l^i B_l^i \succeq 0$.
For each $j=1,\ldots,m$ and $i=1,\ldots,s,$ put
\[
\widetilde{y}_j^i:= \left\{\begin{array}{cc}
              \frac{\lambda_j^i}{\lambda_0^i} &\mbox{if}\  \lambda_0^i \neq 0, \\
                         \bar y_j^i                    &\mbox{if}\   \lambda_0^i = 0,
              \end{array}
 \right.
\]
and
\[
\widetilde{z}_l^i:= \left\{\begin{array}{cc}
              \frac{z_l^i}{\lambda_0^i}&\mbox{if}\  \lambda_0^i \neq 0, \\
                         \bar z_l^i &\mbox{if}\   \lambda_0^i = 0.
              \end{array}
 \right.
\]
Then, for each $i=1,\ldots,s$, we have
\[
A_i^0+\sum_{j=1}^m \widetilde{y}_j^i A_j^i+ \sum_{l=1}^{p_i} \tilde{z}_l^i B_l^i  = \left\{\begin{array}{cc}
              \frac{1}{\lambda_0^i} \big(\lambda_0^i A_0^i+\sum_{j=1}^m \lambda_j^i A_j^i+ \sum_{l=1}^{p_i} z_l^i B_l^i\big)& \mbox{if}\ \lambda_0^i \neq 0, \\
                         A_0^i+\sum_{j=1}^m \bar y_j^i A_j^i+ \sum_{l=1}^{p_i} \bar z_l^i B_l^i                     &\mbox{if}\   \lambda_0^i = 0,
              \end{array}
 \right.
\]
which is always a positive semidefinite symmetric matrix. So, $\widetilde{y}^i:=(\widetilde{y}^i_1,...,\widetilde{y}^i_m) \in \Omega_i$ and hence
\begin{equation}\label{tv1}h_0^i(x)+ \sum_{j=1}^m \widetilde{y}_j^i h_j^i(x) \le f_i(x)\leq 0\quad\mbox{for all}\  i=1,...,s.\end{equation}
Moreover, as the sets $\mathcal{U}_i$ are bounded, according to  Lemma \ref{lemma:1},  for each $i=1,\ldots,s$,
if $\lambda_0^i=0$, then $\lambda_j^i=0$ for all $j=1,\ldots,m$. This implies that
\begin{eqnarray*}
h_0^0+\sum_{j=1}^m \lambda^0_j h_j^0+\sum_{i=1}^s \lambda_0^i \left( h_0^i + \sum_{j=1}^m \widetilde{y}_j^i h_j^i\right)-\mu
  &=&  h_0^0+\sum_{j=1}^m \lambda^0_j h_j^0+\sum_{i=1}^s \left( \lambda_0^i  h_0^i + \sum_{j=1}^m (\lambda_0^i \widetilde{y}_j^i)  h_j^i\right)-\mu  \\
 &= &  h_0^0+\sum_{j=1}^m \lambda^0_j h_j^0+\sum_{i=1}^s \left( \lambda_0^i  h_0^i + \sum_{j=1}^m \lambda_j^i  h_j^i\right)-\mu \in \Sigma^2_d[x].
\end{eqnarray*}
So, noting that $(\lambda^0_1,\ldots,\lambda^0_m) \in \Omega_0$ and $\lambda_0^i\geq 0$ for all $i=1,...,s$, by \eqref{tv1}  it holds that
\[\begin{array}{rl}
f_0(x) &\ge h_0^0(x)+\sum\limits_{j=1}^m \lambda^0_jh_j^0(x) \\ \cr
& \ge h_0^0(x)+\sum\limits_{j=1}^m \lambda^0_j  h_j^0(x)+\sum\limits_{i=1}^s \lambda_0^i \left( h_0^i(x) + \sum\limits_{j=1}^m \widetilde{y}_j^i h_j^i(x)\right) \ge \mu.\end{array} \]
Therefore, ${\rm val}(P) \ge {\rm val}(SDP)$.

Next, we will justify that  ${\rm val}(P) \le {\rm val}(SDP).$  As ${\rm val}(P) \ge {\rm val}(SDP)$ always holds, it suffices to consider the case   ${\rm val}(P)>-\infty$. Noting that the feasible set of $(P)$  is nonempty, we may assume that $r:={\rm val}(P) \in \mathbb{R}$.
Our assumptions guarantee that   there exists $x_0$ such that $f_i(x_0)<0$, $i=1,\ldots,s,$ and each $f_i$ is a continuous convex function. So, the standard Lagrangian duality for convex programming problem shows  that
\begin{equation}\label{tv2}\begin{array}{rl}
r &:= \inf\limits_{x \in \mathbb{R}^n}\big\{f_0(x): f_i(x) \le 0,\ i=1,...,s\big\} \\ \cr
& =  \max\limits_{\lambda\in \R^s_+} \inf\limits_{x \in \mathbb{R}^n}\big\{f_0(x)+\sum\limits_{i=1}^s \lambda_i f_i(x)\big\}\\ \cr
&= \max\limits_{\lambda\in \R^s_+} \inf\limits_{x \in \mathbb{R}^n}\max\limits_{y\in  \prod\limits_{i=0}^s \Omega_i}h_\lambda(x,y), \end{array}
\end{equation}
where $\lambda:=(\lambda_1,...,\lambda_s)\in \R^s,$  $y:=(y_1^0,\ldots,y_m^0,...,y_1^s,\ldots,y_m^s)\in \R^{m(s+1)},$ and
$$h_\lambda(x,y):=h_0^0(x)+\sum_{j=1}^m y_j^0 h_j^0(x)+\sum_{i=1}^s \lambda_i\big(h_0^i(x)+\sum_{j=1}^m y_j^i h_j^i(x)\big).$$
Note that   $ \prod\limits_{i=0}^s \Omega_i$ is a convex compact set,  and for any  $\lambda\in \R^s_+$ the function $h_\lambda(x,y)$ is convex in $x$ for each fixed~$y$ and is concave in $y$ for each fixed $x.$ Thus, for each $\lambda\in \R^s_+,$ by the convex-concave minimax theorem we have
\begin{eqnarray*}
 \inf_{x \in \mathbb{R}^n}\max_{y\in  \prod\limits_{i=0}^s \Omega_i}h_\lambda(x,y)= \max_{y\in  \prod\limits_{i=0}^s \Omega_i}\inf_{x \in \mathbb{R}^n}h_\lambda(x,y).
\end{eqnarray*}
This together with \eqref{tv2} yields
$$\begin{array}{rl}
r  &= \max\limits_{\lambda\in \R^s_+} \max\limits_{y\in  \prod\limits_{i=0}^s \Omega_i}\inf\limits_{x \in \mathbb{R}^n}h_\lambda(x,y)\\ \cr
&=  \max\limits_{\substack{(y_1^i,\ldots,y_m^i) \in \Omega_i, 0 \le i\le s \\
\lambda_1 \ge 0,...,\lambda_s\geq0}}\inf\limits_{x \in \mathbb{R}^n}\big\{h_0^0(x)+\sum\limits_{j=1}^m y_j^0 h_j^0(x)+\sum\limits_{i=1}^s \lambda_i\big(h_0^i(x)+\sum\limits_{j=1}^m y_j^i h_j^i(x)\big)\big\}.\end{array}
$$
In particular, the latter  shows that there exist $(\tilde{y}_1^i,\ldots,\tilde{y}_m^i) \in \Omega_i,$ $0 \le i\le s,$ and $\tilde{\lambda}_1 \ge 0,...,\tilde{\lambda}_s \ge 0,$  such that
\[
\inf_{x \in \mathbb{R}^n}\big\{h_0^0(x)+\sum_{j=1}^m \tilde{y}_j^0 h_j^0(x)+\sum_{i=1}^s \tilde{\lambda}_i\big(h_0^i(x)+\sum_{j=1}^m \tilde{y}_j^i h_j^i(x)\big)\big\} =r.
\]
Denote $G(x)=h_0^0(x)+\sum_{j=1}^m \tilde{y}_j^0 h_j^0(x)+\sum_{i=1}^s \tilde{\lambda}_i\big(h_0^i(x)+\sum_{j=1}^m \tilde{y}_j^i h_j^i(x)\big)-r$.
By  Lemma~\ref{minattain}, there exists $a \in \mathbb{R}^n$ such that $G(a)=\inf_{x \in \mathbb{R}^n} G(x)=0$ (and so, $\nabla G(a)=0$). As $G$ is an SOS-convex polynomial, $H(x,y):=G(x)-G(y)-\nabla G(y)^T(x-y)$ is a sums-of-squares polynomial. Letting $y=a$, it follows that
$G(x)=H(x,a)$ is also a sums-of-squares polynomial, that is,  \begin{equation}\label{eq:pp1}
h_0^0+\sum_{j=1}^m \tilde{y}_j^0 h_j^0+\sum_{i=1}^s \tilde{\lambda}_i\big(h_0^i+\sum_{j=1}^m \tilde{y}_j^i h_j^i\big)-r \in \Sigma^2_d[x].
\end{equation}
On the other hand, for each $i=0,...,s,$ since  $(\tilde{y}_1^i,\ldots,\tilde{y}_m^i) \in \Omega_i,$  there exists $ \tilde z^i=(\tilde z^i_1,...,\tilde z^i_{p_i})\in \R^{p_i}$ such that
\begin{equation}\label{eq:pp}
A_0^i+\sum_{j=1}^m \tilde y_j^i A_j^i+ \sum_{l=1}^{p_i}  \tilde z_l^i B_l^i \succeq 0.\end{equation}
Now, let  $\lambda^0_j:=\tilde y_j^0$, $j=1,\ldots,m$, $z^0_l:=\tilde z^0_l,$ $l=1,...,p_0,$   $\lambda_0^i:=\tilde{\lambda}_i$ and $\lambda_j^i:=\tilde{\lambda}_i \tilde{y}_j^i$  and $z_l^i:=\tilde{\lambda}_i \tilde{z}_l^i$  for each $j=1,\ldots,m,$  $i=1,\ldots,s,$ $l=1,...,p_i$.
From \eqref{eq:pp1}, \eqref{eq:pp} and $\tilde{\lambda}_i \ge 0$,
we see that $$ h_0^0+\sum_{j=1}^m \lambda^0_j  h_j^0+\sum_{i=1}^s \left(\lambda_0^i h_0^i + \sum_{j=1}^m \lambda_j^i h_j^i\right) -r \in \Sigma^2_d[x],$$\
$$A_0^0+\sum_{j=1}^m\lambda_j^0 A_j^0+\sum_{l=1}^{p_0} z_l^0 B_l^0 \succeq 0,$$
and $$\lambda_0^i A_0^i+\sum_{j=1}^m\lambda_j^i A_j^i+\sum_{l=1}^{p_i} z_l^i B_l^i \succeq 0, \, i=1,\ldots,s.$$
This says that  $  \lambda_0^i \ge 0, (\lambda_1^i,....,\lambda_m^i) \in \mathbb{R}^m,  z_l^i \in \mathbb{R}, r \in \mathbb{R}$ is feasible for $(SDP).$
Thus,
$${\rm val}(P) =r\le {\rm val}(SDP),$$ and hence ${\rm val}(P) = {\rm val}(SDP)$. The proof is complete.
\end{proof}

\begin{remark}{\bf (Special Cases: Min-max programs involving SOS-convex polynomials)}
In the case  where the objective function $f_0$ can be expressed as a finite maximum of SOS-convex polynomials and the constraint functions $f_i$, $i=1,\ldots,s$, are SOS-convex polynomials, Theorem \ref{th:exact_SDP} has established in \cite[Theorem 3.1]{minmax} for min-max programs.
\end{remark}
In the preceding theorem, we see that the optimal value of a SOS-convex semi-algebraic optimization problem (P) can be found by solving a single semi-definite programming problem, that is, its SDP relaxation problem (SDP). Next, we examine the important question that: how to recover an optimal solution of (P) from its SDP relaxation problem?

For a given $z=(z_{\alpha}) \in \mathbb{R}^{s(r,n)}$,  we define a linear function $L_z: \mathbb{R}_r[x_1,\ldots,x_n] \rightarrow \mathbb{R}$ by
\begin{equation}\label{Reisz}
L_z(u)=\sum_{\alpha=1}^{s(r,n)} u_{\alpha} z_{\alpha} \mbox{ with } u(x)=\sum_{\alpha=1}^{s(r,n)} u_\alpha x^{(r)}_{\alpha}.
\end{equation}
For each $\alpha=1,\ldots,s(2r,n)$, define $M_{\alpha}$ to be the $(s(r,n) \times s(r,n))$ symmetric matrix such that
\[
{\rm Tr} (M_{\alpha} W)= \sum_{\substack{1 \le \beta,\gamma \le s(r,n) \\
i(\beta)+i(\gamma)=i(\alpha)}} W_{\beta,\gamma} \mbox{ for all } W \in S^{s(r,n)}.
\]
Then, for $z=(z_{\alpha}) \in \mathbb{R}^{s(2r,n)}$, the moment matrix with respect to the sequence $z=(z_{\alpha})$ with degree $r$ is denoted by ${\bf M}_r(z)$, and is defined by
\[
{\bf M}_r(z)=\sum_{1 \le \alpha \le s(2r,n)} z_{\alpha}M_{\alpha}.
\]

As a simple illustration, let $r=4$ and $n=1$, for  $z=(z_1,\ldots,z_5)^T \in \mathbb{R}^{s(4,1)}=\mathbb{R}^5$,
\[
L_z(u)= \sum_{i=1}^{5} \alpha_iz_i, \mbox{ for all } u(x)=\alpha_1+\alpha_2 x+\alpha_3 x^2+\alpha_4x^3+ \alpha_5 x^4.
\]
Moreover, for $r=1$, $n=2$ and $z \in \mathbb{R}^{s(2,2)}=\mathbb{R}^{6}$
\[
{\bf M}_1(z)= \left(\begin{array}{ccc}
                    z_1 & z_2 & z_3 \\
                    z_2 & z_4 & z_5 \\
                    z_3 & z_5 & z_6
                    \end{array}
\right).
\]
Recall  that $(SDP)$ can be equivalently rewritten as a semi-definite programming problem.
{\small \begin{eqnarray*}
& & \sup_{\substack{\lambda_0^i \ge 0,   (\lambda_1^i,...,\lambda_m^i) \in \mathbb{R}^m\\
 z_l^i \in \mathbb{R}, \mu \in \mathbb{R},
W \in S^{s(d/2,n)}}} \{\mu:   (h_0^0)_1+\sum_{j=1}^m  \lambda_j^0 (h_j^0)_1+\sum\limits_{i=1}^{s}\left(\lambda_0^i (h_0^i)_1 + \sum_{j=1}^m \lambda^i_j (h_j^i)_1\right)-\mu =W_{1,1}, \nonumber \\
& & \ \ \ \ \ \ \ \ \ \ \ \ \ \ \ \ \ \ \ \ \  (h_0^0)_\alpha+\sum_{j=1}^m \lambda_j^0 (h_j^0)_\alpha+\sum\limits_{i=1}^{s}\left(\lambda_0^i (h_0^i)_\alpha + \sum_{j=1}^m \lambda_j^i (h_j^i)_\alpha\right)=\sum_{\substack{ 1\le \beta,\gamma \le s(d/2,n) \\i(\beta)+i(\gamma)=i(\alpha)}}W_{\beta,\gamma} \, , \ 2  \le \alpha \le s(d,n)\nonumber \\
%f+\sum_{i=1}^m \left(\lambda_i^0 g_i^0 + \sum_{j=1}^s \lambda_i^j g_i^j\right) -\mu \in \Sigma^2_d, \\
& & \ \ \ \ \ \ \ \ \ \ \ \ \ \ \ \ \ \ \ \ \  W \succeq 0,  \quad   A_0^0+\sum_{j=1}^m\lambda_j^0 A_j^0+\sum_{l=1}^{p_0} z_l^0 B_l^0 \succeq 0,\\
& & \ \ \ \ \ \ \ \ \ \ \ \ \ \ \ \ \ \ \ \ \ \lambda_0^i A_0^i+\sum_{j=1}^m\lambda_j^i A_j^i+\sum_{l=1}^{p_i} z_l^i B_l^i \succeq 0, \, i= 1,\ldots,s\}. \nonumber
\end{eqnarray*}}
The Lagrangian dual  of the above semi-definite programming reformulation of $(SDP)$ can be stated  as follows:
$${\small \begin{array}{rl}
 &\inf\limits_{\substack{y=(y_{\alpha}) \in \mathbb{R}^{s(d,n)} \\ Z_i \succeq 0}}\,  \sup\limits_{\substack{\lambda_0^i \ge 0,   (\lambda_1^i,...,\lambda_m^i) \in \mathbb{R}^m\\
 z_l^i \in \mathbb{R}, \mu \in \mathbb{R},
W\succeq0}}\Big\{\mu +  y_1\left( (h_0^0)_1+\sum\limits_{j=1}^m   \lambda_j^0 (h_j^0)_1+\sum\limits_{i=1}^{s}\left(\lambda_0^i (h_0^i)_1 + \sum\limits_{j=1}^m \lambda^i_j (h_j^i)_1\right)-\mu -W_{1,1}\right) \\
 & +\sum\limits_{2 \le \alpha \le s(d,n)} y_{\alpha}\left((h_0^0)_\alpha+\sum\limits_{j=1}^m   \lambda_j^0  (h_j^0)_\alpha+\sum\limits_{i=1}^{s}\left(\lambda_0^i (h_0^i)_\alpha + \sum\limits_{j=1}^m \lambda_j^i (h_j^i)_\alpha\right)-\sum\limits_{\substack{ 1\le \beta,\gamma \le s(d/2,n) \\i(\beta)+i(\gamma)=i(\alpha)}}W_{\beta,\gamma}\right) \\
&+{\rm Tr}\big(Z_0 (A_0^0+\sum\limits_{j=1}^m\lambda_j^0 A_j^0+\sum\limits_{l=1}^{p_0} z_l^0 B_l^0)\Big) +\sum\limits_{i=1}^s{\rm Tr}\big(Z_i (\lambda_0^i A_0^i+\sum\limits_{j=1}^m\lambda_j^i A_j^i+\sum\limits_{l=1}^{p_i} z_l^i B_l^i\Big)\Big\},
\end{array}}$$
which   can be further simplified as
\begin{eqnarray*}
 (SDP^*) &  \displaystyle \inf_{y=(y_{\alpha}) \in \mathbb{R}^{s(d,n)}, Z_i \succeq 0} & \sum_{1 \le \alpha \le s(d,n)} (h^0_0)_{\alpha} y_{\alpha}+{\rm Tr}\big(Z_0 A_0^0\big) \\
 & \mbox{ s.t. } &   \sum_{1 \le \alpha \le s(d,n)} (h^i_0)_{\alpha} y_{\alpha}+{\rm Tr} \big( Z_i A^i_0\big) \le 0, \, i=  1,\ldots,s, \\
 & &  \sum_{1 \le \alpha \le s(d,n)} (h_j^i)_{\alpha} y_{\alpha}+{\rm Tr} \big( Z_i A_j^i\big) = 0, \, i=0, 1,\ldots,s, j=1,\ldots,m, \\
  & &{\rm Tr} \big( Z_i B_l^i\big) = 0,\, i=0, 1,...,s,\, l=1,...,p_i,\\
 & & {\bf M}_{\frac{d}{2}}(y)=\sum_{1 \le \alpha \le s(d,n)} y_{\alpha} M_{\alpha} \succeq 0,\\
 & & y_1=1.
\end{eqnarray*}
We note that the problem $(SDP^*)$  is also a semi-definite programming problem, and hence can be efficiently solved as well.

Next, we recall the following  Jensen's inequality for SOS-convex polynomial (cf \cite{Lasserre}) which will play an important role in our later analysis.
\begin{lemma}{\bf (Jensen's inequality for SOS-convex polynomial \cite[Theorem 5.13]{Lasserre})} \label{lemma:Jensen}
Let $f$ be an SOS-convex polynomial on $\mathbb{R}^n$ with degree $2r$. Let $y \in \mathbb{R}^{s(2r,n)}$ with $y_1=1$ and ${\bf M}_{r}(y) \succeq 0$. Then, we have
$$L_{y}(f) \ge f(L_{y}(X_1),\ldots,L_{y}(X_n)),$$
where $L_y$  is given as in (\ref{Reisz}) and $X_i$ denotes the polynomial which maps a vector in $\R^n$
to its $i$th coordinate.
\end{lemma}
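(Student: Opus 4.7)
The plan is to leverage the defining property of SOS-convexity to reduce the claimed inequality to a statement about $L_y$ applied to a sum-of-squares polynomial, and then use the positive semidefiniteness of the moment matrix ${\bf M}_r(y)$ to conclude nonnegativity.

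First, I would introduce the shorthand $z := (L_y(X_1),\ldots,L_y(X_n)) \in \mathbb{R}^n$. Since $f$ is SOS-convex, by definition the polynomial $F(x,u) := f(x)-f(u)-\nabla f(u)^T(x-u)$ on $\R^n\times \R^n$ is a sum of squares. Substituting $u=z$ (a specific point), we obtain that
\[
g(x) := f(x) - f(z) - \nabla f(z)^T(x-z)
\]
is a sum-of-squares polynomial on $\R^n$ of degree at most $2r$. Hence there exist polynomials $g_1,\ldots,g_N$ on $\R^n$, each of degree at most $r$, with $g(x) = \sum_{j=1}^N g_j(x)^2$.

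Next, I would apply $L_y$ (which is linear by construction) to both sides and argue that each term $L_y(g_j^2)$ is nonnegative. For each $j$, write $g_j(x)=\sum_{\alpha=1}^{s(r,n)}c^{(j)}_\alpha x^{(r)}_\alpha$ in the monomial basis up to degree $r$. Then
\[
g_j(x)^2 \;=\; \sum_{1\le \beta,\gamma\le s(r,n)} c^{(j)}_\beta c^{(j)}_\gamma\, x^{i(\beta)+i(\gamma)},
\]
so by linearity of $L_y$ and the definition of the moment matrix,
\[
L_y(g_j^2) \;=\; \sum_{\beta,\gamma} c^{(j)}_\beta c^{(j)}_\gamma\, y_{i(\beta)+i(\gamma)} \;=\; (c^{(j)})^T {\bf M}_r(y)\, c^{(j)} \;\ge\; 0,
\]
where the last step uses ${\bf M}_r(y)\succeq 0$. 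Summing over $j$ gives $L_y(g) \ge 0$.

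Finally, I would unpack $L_y(g)$ using linearity, the normalization $y_1 = 1$, and $L_y(X_i) = z_i$: the constant term $f(z)$ contributes $-f(z)\cdot y_1 = -f(z)$, and the linear term $-\nabla f(z)^T(x-z)$ contributes $-\nabla f(z)^T(z - z\cdot y_1) = 0$. Therefore
\[
0 \;\le\; L_y(g) \;=\; L_y(f) - f(z),
\]
which is exactly the desired inequality $L_y(f)\ge f(L_y(X_1),\ldots,L_y(X_n))$. The main subtlety in the argument is the clean identification of $L_y(g_j^2)$ with the quadratic form $(c^{(j)})^T {\bf M}_r(y)\, c^{(j)}$, which relies crucially on the fact that $g_j$ has degree at most $r$ so that all monomials appearing in $g_j^2$ have degree at most $2r$ and hence lie within the coordinates of $y\in\R^{s(2r,n)}$. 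Everything else is bookkeeping.
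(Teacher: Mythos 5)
Your proof is correct. Note that the paper does not prove this lemma at all --- it is quoted verbatim from Lasserre's book (Theorem 5.13 of \cite{Lasserre}) --- so there is no in-paper argument to compare against; your write-up is essentially the standard proof of that cited result. The two ingredients you use are exactly the right ones: (a) fixing the second argument $u=z$ in the SOS certificate $F(x,u)=f(x)-f(u)-\nabla f(u)^T(x-u)$ to get that the gradient-inequality polynomial $g$ is SOS in $x$ alone (the paper itself uses this same substitution trick inside the proof of Theorem \ref{th:exact_SDP}), and (b) the identity $L_y(\sigma^2)=c^T{\bf M}_r(y)\,c$ for $\deg\sigma\le r$, which shows $L_y$ is nonnegative on $\Sigma^2_{2r}[x]$ whenever ${\bf M}_r(y)\succeq 0$. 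Your closing bookkeeping with $y_1=1$ and $L_y(X_i)=z_i$ is also right, and you correctly flag the one point that needs care, namely that every square in the SOS decomposition of $g$ has degree at most $r$ so that all monomials of $g_j^2$ are indexed by coordinates of $y\in\R^{s(2r,n)}$.
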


The next main result of this section is the following theorem, providing the way to recover a solution to problem (P) from a solution to its SDP relaxation.

\begin{theorem}{\bf (Recovery of the  solution)}\label{th:3.2}
%For problem $(P)$  and $(LD)$, let $d$ be the smallest even number such that $d \ge \max\{{\rm deg}h_i^{j}\}.$  %Suppose that the assumptions of  Theorem \ref{th:exact_SDP} hold.
For problem $(P)$, suppose that the following strict feasibility conditions hold:
\begin{itemize}
\item[{\rm (i)}] there exists $\bar x \in \mathbb{R}^n$ such that $f_i(\bar x)<0 $  for all $i=1,...,s$;
\item[{\rm (ii)}] for each $i=0,1,\ldots,s$, there exist $\bar y^i \in \mathbb{R}^m$ and $\bar z^i \in \mathbb{R}^{p_i}$ such that  $A_0^i+\sum_{j=1}^m \bar y_j^i A_j^i+ \sum_{l=1}^{p_i} \bar z_l^i B_l^i \succ 0$.
%\[
%1
%\]
\end{itemize}
%
%and  are satisfied for $(P)$, that is,  .
%and
%there exist $\hat y=(\hat y_{\alpha}) \in \mathbb{R}^{s(d,n)}$ and $\hat Z_i \succ 0$ such that
%\[
%\left\{\begin{array}{cl}
%& \displaystyle \sum_{1 \le \alpha \le s(d,n)} (h_0^i)_{\alpha} \hat y_{\alpha}+{\rm Tr} \big( \hat Z_i A_0^i\big) \le 0, \, i= 0, 1,\ldots,s,  \\
% &  \displaystyle  \sum_{1 \le \alpha \le s(d,n)} (h_i^j)_{\alpha} \hat y_{\alpha}+{\rm Tr} \big(\hat Z_i  A_j^i\big) = 0, \, i=0, 1,\ldots,m, j=1,\ldots,s, \\
% & {\rm Tr} \big( \hat Z_i B_l^i\big) = 0,\, i=0, 1,...,s,\, l=1,...,p_i,\\
% & {\bf M}_{\frac{d}{2}}(\hat y) \succ 0,\\
% &  \hat y_1=1.
% \end{array}
% \right.
%\]
Let $(y^*,Z_0^*,Z_1^*,...,Z_s^*)$ be an optimal solution for $(SDP^*)$  and let $x^*:=(L_{y^*}(X_1),\ldots,L_{y^*}(X_n))^T \in \mathbb{R}^n$ where $X_i$ denotes the polynomial which maps a vector $x\in \R^n$
to its $i$th coordinate. Then, $x^*$ is an optimal solution for $(P)$.
\end{theorem}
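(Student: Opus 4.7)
The strategy is to leverage strong duality between $(SDP)$ and $(SDP^*)$ and then use Jensen's inequality for SOS-convex polynomials (Lemma~\ref{lemma:Jensen}) to convert the linear-functional estimates carried by $y^*$ into pointwise estimates at $x^*$.

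First I would establish that ${\rm val}(SDP) = {\rm val}(SDP^*)$. Hypothesis~(ii), combined with hypothesis~(i), provides the Slater-type strict feasibility needed to invoke standard SDP strong duality between $(SDP)$ and its Lagrangian dual $(SDP^*)$. Combined with the exact SDP relaxation proven in Theorem~\ref{th:exact_SDP}, this yields
\[
{\rm val}(P) = {\rm val}(SDP) = {\rm val}(SDP^*) = L_{y^*}(h_0^0) + {\rm Tr}(Z_0^* A_0^0).
\]

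Next I would verify that $x^*$ is feasible for $(P)$. Fix $i \in \{1,\ldots,s\}$ and an arbitrary $y^i \in \Omega_i$ with associated $z^i \in \mathbb{R}^{p_i}$ satisfying $A_0^i + \sum_j y_j^i A_j^i + \sum_l z_l^i B_l^i \succeq 0$. Pairing this matrix in trace with $Z_i^* \succeq 0$ gives
\[
{\rm Tr}(Z_i^* A_0^i) + \sum_{j=1}^m y_j^i {\rm Tr}(Z_i^* A_j^i) + \sum_{l=1}^{p_i} z_l^i {\rm Tr}(Z_i^* B_l^i) \geq 0.
\]
Substituting the equalities $L_{y^*}(h_j^i) = -{\rm Tr}(Z_i^* A_j^i)$ and ${\rm Tr}(Z_i^* B_l^i) = 0$ together with the inequality $L_{y^*}(h_0^i) \leq -{\rm Tr}(Z_i^* A_0^i)$, all taken from the constraints of $(SDP^*)$, reorganizes the display to
\[
L_{y^*}\!\left(h_0^i + \sum_{j=1}^m y_j^i h_j^i\right) \leq 0.
\]
Since $h_0^i + \sum_j y_j^i h_j^i$ is SOS-convex of degree at most $d$ and $y^*$ satisfies $y_1^* = 1$ with ${\bf M}_{d/2}(y^*) \succeq 0$, Lemma~\ref{lemma:Jensen} yields
\[
h_0^i(x^*) + \sum_{j=1}^m y_j^i h_j^i(x^*) \leq L_{y^*}\!\left(h_0^i + \sum_{j=1}^m y_j^i h_j^i\right) \leq 0;
\]
taking the supremum over $y^i \in \Omega_i$ then gives $f_i(x^*) \leq 0$.

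Finally, the identical chain of manipulations with $i=0$, using the objective value of $(SDP^*)$ in place of the inequality constraint, yields $h_0^0(x^*) + \sum_j y_j^0 h_j^0(x^*) \leq L_{y^*}(h_0^0) + {\rm Tr}(Z_0^* A_0^0) = {\rm val}(P)$ for every $y^0 \in \Omega_0$; hence $f_0(x^*) \leq {\rm val}(P)$, and combined with feasibility of $x^*$ this forces $f_0(x^*) = {\rm val}(P)$. The main obstacle is the careful bookkeeping needed to establish strong duality under hypotheses~(i)--(ii) and to correctly isolate the auxiliary equalities ${\rm Tr}(Z_i^* B_l^i) = 0$ so as to cancel the $z^i$ slack variables; once this is in place, Jensen's inequality straightforwardly transfers the moment estimate to a pointwise estimate at $x^*$.
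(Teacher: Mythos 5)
Your second half — pairing the LMI certificate of each $y^i\in\Omega_i$ with $Z_i^*\succeq 0$, cancelling the $z^i$ variables via ${\rm Tr}(Z_i^*B_l^i)=0$, and then applying Lemma~\ref{lemma:Jensen} to pass from $L_{y^*}\big(h_0^i+\sum_j y_j^i h_j^i\big)\le 0$ to $f_i(x^*)\le 0$ and likewise to $f_0(x^*)\le {\rm val}(SDP^*)$ — is exactly the paper's argument and is correct. The gap is in your first step. You assert that hypotheses (i)--(ii) give the Slater condition needed for ``standard SDP strong duality'' between $(SDP)$ and $(SDP^*)$, hence ${\rm val}(SDP)={\rm val}(SDP^*)$. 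But (ii) only guarantees strict feasibility of the small LMI blocks $A_0^i+\sum_j\lambda_j^iA_j^i+\sum_l z_l^iB_l^i\succeq 0$; it says nothing about the SOS-membership constraint, which in the SDP reformulation becomes a Gram matrix $W\succeq 0$ tied to the polynomial coefficients by linear equalities. Those equalities can force $W$ to be singular at \emph{every} feasible point — this happens in the paper's own illustrative example, where the constraints impose $W_{55}=W_{66}=0$ — so $(SDP)$ need not admit a strictly feasible point, and the Slater-based strong duality theorem you invoke does not apply. (Strict feasibility of $(SDP^*)$ is equally unavailable: it would require some $Z_i\succ 0$ with ${\rm Tr}(Z_iB_l^i)=0$ and ${\bf M}_{d/2}(y)\succ 0$.) Without ${\rm val}(SDP^*)\le{\rm val}(P)$, your chain only yields ${\rm val}(P)\le f_0(x^*)\le{\rm val}(SDP^*)$, which together with weak duality does not force $f_0(x^*)={\rm val}(P)$.

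The paper closes this hole by a different route: weak duality gives ${\rm val}(SDP^*)\ge{\rm val}(SDP)={\rm val}(P)$, and the reverse inequality ${\rm val}(SDP^*)\le{\rm val}(P)$ is proved \emph{constructively}. For each feasible $x$ of $(P)$ with $r=f_0(x)$, the statements $\sup_{y^i\in\Omega_i}\{h_0^i(x)+\sum_j y_j^ih_j^i(x)\}\le 0$ (and $\le r$ for $i=0$) are linear conic programs over the spectrahedra $\Omega_i$; condition (ii) makes \emph{these} subproblems strictly feasible, so SDP strong duality applied to each of them separately produces multipliers $Z_i\succeq 0$ with $h_j^i(x)+{\rm Tr}(Z_iA_j^i)=0$, ${\rm Tr}(Z_iB_l^i)=0$, and $h_0^i(x)+{\rm Tr}(Z_iA_0^i)\le 0$ (resp.\ $\le r$). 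Then $(x^{(d)},Z_0,\ldots,Z_s)$ is feasible for $(SDP^*)$ with objective value at most $f_0(x)$, and taking the infimum over feasible $x$ gives ${\rm val}(SDP^*)\le{\rm val}(P)$. Replacing your strong-duality assertion with this construction repairs the proof; everything after that point in your write-up can stand as is.
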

\begin{proof}  From condition {\rm (i)},  the exact SDP relaxation result (Theorem \ref{th:exact_SDP}) gives us that ${\rm val}(P)={\rm val}(SDP)$. Note that $(SDP)$ and $(SDP^*)$ are dual problems to each other. The usual weak duality for semi-definite programming implies that
${\rm val}(SDP^*) \ge {\rm val}(SDP)={\rm val}(P)$. Next, we establish  that ${\rm val}(SDP^*)={\rm val}(P)$, where ${\rm val}(SDP^*)$ is the optimal value of problem $(SDP^*)$.  To see this, let $x$ be a feasible point of $(P)$ and let $r=f_0(x)$.
Then
$$f_0(x)=\sup_{(y_1^0,\ldots,y_m^0) \in \Omega_0} \{h_0^0(x)+\sum_{j=1}^m y_j^0 h_j^0(x)\}=r$$
 and 
 $$f_i(x)=\sup_{(y_1^i,\ldots,y_m^i) \in \Omega_i} \{h_0^i(x)+\sum_{j=1}^m y_j^i h_j^i(x)\} \le 0, \, i=1,\ldots,s,$$ where $\Omega_i$, $i=0,1,\ldots,s$ are compact sets given by
$\Omega_i=\big\{(y_1^i,\ldots,y_m^i) \in \mathbb{R}^m: \exists  z^i=(z^i_1,...,z^i_{p_i})  \in \mathbb{R}^{p_i} \mbox{ s.t. } A_0^i+\sum_{j=1}^m y_j^i A_j^i+ \sum_{l=1}^{p_i} z_l^i B_l^i \succeq 0\big\}$. This shows that
\[
(y^0,z^0) \in \mathbb{R}^m \times \mathbb{R}^{p_0}, \ A_0^0+\sum_{j=1}^m y_j^0 A_j^0+ \sum_{l=1}^{p_0} z_l^0 B_l^0 \succeq 0 \, \Rightarrow \, h_0^0(x)+\sum_{j=1}^m y_j^0 h_j^0(x) \le r,
\]
and
\[
(y^i,z^i) \in \mathbb{R}^m \times \mathbb{R}^{p_i}, \ A_0^i+\sum_{j=1}^m y_j^i A_j^i+ \sum_{l=1}^{p_i} z_l^i B_l^i \succeq 0 \, \Rightarrow \, h_0^i(x)+\sum_{j=1}^m y_j^i h_j^i(x) \le 0, \ i=1,\ldots,s.
\]
It then follows from condition {\rm (ii)} and the strong duality theorem for semi-definite programming  that there exist $Z_i \succeq 0$, $i=0,1,\ldots,s$ such that
\[
\left\{ \begin{array}{cc}
& h_0^0 (x)+{\rm Tr} \big( Z_0 A_0^0\big) \le r,  \\
& h^i_0(x)+{\rm Tr}\big(Z_iA^i_0\big)\le 0,\, i=1,...,s,\\
 &   h_j^i(x)+{\rm Tr} \big( Z_i A_j^i\big) = 0, \, i= 0, 1,\ldots,s,\,  j=1,\ldots,m, \\
  & {\rm Tr} \big( Z_i B_l^i\big) = 0,\, i=0, 1,...,s,\, l=1,...,p_i,
\end{array} \right.
\]
Let $x^{(d)}=(1,x_1,x_2,\ldots,x_n,x_1^2,x_1x_2,\ldots,x_2^2,\ldots,x_n^2,\ldots,x_1^{d},\ldots,x_n^d)^T$. Then, $(x^{(d)},Z_0,Z_1,\ldots,Z_s)$  is feasible for $(SDP^*)$  and
\[
f_0(x)=r \ge h_0^0 (x)+{\rm Tr} \big( Z_0 A_0^0\big)= \sum\limits_{1 \le \alpha \le s(d,n)} (h_0^0)_{\alpha} x^{(d)}_{\alpha}+{\rm Tr} \big( Z_0 A_0^0\big).
\]
This shows that ${\rm val}(P) \ge {\rm val}(SDP^*)$, and hence ${\rm val}(P)={\rm val}(SDP^*)$.

Now, let $(y^*,Z_0^*,Z_1^*,...,Z_s^*)$ be an optimal solution for $(SDP^*)$. Then, $Z_i^* \succeq 0$, $i=0,1,...,s$, and
$$ \begin{array}{rl} & \sum\limits_{1 \le \alpha \le s(d,n)} (h_0^i)_{\alpha} y_{\alpha}^*+{\rm Tr} \big( Z_i^* A_0^i\big) \le 0, \, i=1,\ldots,s, \\
 &  \sum\limits_{1 \le \alpha \le s(d,n)} (h^i_j)_{\alpha} y^*_{\alpha}+{\rm Tr} \big( Z_i^* A_j^i\big) = 0, \, i=0, 1,\ldots,s, j=1,\ldots,m, \\
  &{\rm Tr} \big( Z_i^* B_l^i\big) = 0,\, i=0, 1,...,s,\, l=1,...,p_i,\\
 &  {\bf M}_{\frac{d}{2}}(y^*)=\sum\limits_{1 \le \alpha \le s(d,n)} y^*_{\alpha} M_{\alpha} \succeq 0,\\
 &  y_1^*=1.\end{array}$$
 Note that for each $(y^0_1,...,y^0_m)\in \Omega_0,$ one can find  $z^0=(z^0_1,...,z^0_{p_0})  \in \mathbb{R}^{p_0}$ such that
 $$A^0_0+\sum\limits_{j=1}^my^0_j A_j^0+\sum\limits_{l=1}^{p_0}z^0_lB_l^0\succeq0.$$
So,  for each $(y^0_1,...,y^0_m)\in \Omega_0,$ it holds that
 \begin{equation}\label{eq1}\begin{array}{rl}  \sum\limits_{1 \le \alpha \le s(d,n)} (h_0^0)_{\alpha} y_{\alpha}^*+{\rm Tr}\big( Z_0^* A_0^0\big)&\geq \sum\limits_{1 \le \alpha \le s(d,n)} (h_0^0)_{\alpha} y_{\alpha}^* -{\rm Tr} \big( Z_0^*(\sum\limits_{j=1}^my^0_j A_j^0+\sum\limits_{l=1}^{p_0}z^0_lB_l^0)\big)\\
 &=\sum\limits_{1 \le \alpha \le s(d,n)} (h_0^0)_{\alpha} y_{\alpha}^*-\sum\limits_{j=1}^my^0_j{\rm Tr} \big( Z_0^*A_j^0\big)\\
 &=\sum\limits_{1 \le \alpha \le s(d,n)} (h_0^0)_{\alpha} y_{\alpha}^*+\sum\limits_{j=1}^my^0_j  \sum\limits_{1 \le \alpha \le s(d,n)}(h^0_j)_{\alpha} y^*_{\alpha}\\
 & =L_{y^*}\big(h_0^0+\sum\limits_{j=1}^my^0_jh^0_j\big).
 \end{array}\end{equation}
 Since $h_0^0+\sum\limits_{j=1}^my^0_jh^0_j$ is SOS-convex,  ${\bf M}_{\frac{d}{2}}(y^*)\geq 0,$  and  $y_1^*=1,$  by Lemma \ref{lemma:Jensen}, we have
 \begin{equation}\label{eq2}L_{y^*}\big(h_0^0+\sum\limits_{j=1}^my^0_jh^0_j\big)\geq \big(h_0^0+\sum\limits_{j=1}^my^0_jh^0_j\big)(L_{y^*}(X_1),\ldots,L_{y^*}(X_n))=h_0^0(x^*)+\sum\limits_{j=1}^my^0_jh^0_j(x^*)\end{equation}
 for every $(y^0_1,...,y^0_m)\in \Omega_0.$  Taking supremum over all $(y^0_1,...,y^0_m)\in \Omega_0$ in  \eqref{eq1} and using \eqref{eq2}, it follows that
 $$f_0(x^*)\leq \sum\limits_{1 \le \alpha \le s(d,n)} (h_0^0)_{\alpha} y_{\alpha}^*+{\rm Tr}\big( Z_0^* A_0^0\big).$$
 Taking into account  that  $(y^*,Z_0^*,Z_1^*,...,Z_s^*)$ is an optimal solution for $(SDP^*),$ we get
$$ {\rm val}(SDP^*)=\sum_{1 \le \alpha \le s(d,n)} (h_0^0)_{\alpha} y_{\alpha}^*+{\rm Tr} \big( Z_0^* A_0^0\big)  \ge  f_0(x^*).$$
 We claim that  $x^*$ is feasible for (P).   Granting this, we have    $$ {\rm val}(SDP^*)\geq  f_0(x^*) \ge {\rm val}(P)={\rm val}(SDP^*).$$
This forces that $f_0(x^*)={\rm val}(P)$, and so, $x^*$ is an optimal solution for (P).

We now verify our claim. Take any $i=1,...,s$ and $(y^i_1,...,y^i_m)\in \Omega_i.$  Then one can find  $z^i=(z^i_1,...,z^i_{p_i})  \in \mathbb{R}^{p_i}$ such that
 $$A^i_0+\sum\limits_{j=1}^my^i_j A_j^i+\sum\limits_{l=1}^{p_i}z^i_lB_l^i\succeq0.$$
 Arguing as before,   we arrive at
$$ f_i(x^*)\leq \sum\limits_{1 \le \alpha \le s(d,n)} (h_0^i)_{\alpha} y_{\alpha}^*+{\rm Tr} \big( Z_i^* A_0^i\big)\leq 0.$$
 This shows that $x^*$ is feasible for (P).  So, the conclusion follows.
\end{proof}

Finally, we illustrate how to find the optimal value and an optimal solution for an SOS-convex semi-algebraic program by solving a single
semi-definite programming problem.
\begin{example}{\bf (Illustrative example)}{\rm
 Consider the following simple 2-dimensional nonsmooth convex optimization problem:
   \begin{eqnarray*}
(EP) & \min & x_1^4-x_2 \\
& \mbox{\rm s.t. } & x_1^2+ x_2^2+ 2\|(x_1,x_2)\| -1\le 0. % \, \forall \, (u_1^1,u_1^2) \in \mathcal{U},
\end{eqnarray*}
Let
\[
\Omega_1=\{(y_1^1,y_2^1): (y_1^1)^2+(y^1_2)^2 \le 1\}= \{(y_1^1,y^1_2): \left(\begin{array}{ccc}
                                  1 & 0 & 0 \\
                                  0 & 1 & 0 \\
                                  0 & 0 & 1
                                  \end{array}
 \right)+ y_1^1 \left(\begin{array}{ccc}
                                  0 & 0 & 1 \\
                                  0 & 0 & 0 \\
                                  1 & 0 & 0
                                  \end{array}
 \right)+y^1_2\left(\begin{array}{ccc}
                                  0 & 0 & 0 \\
                                  0 & 0 & 1 \\
                                  0 & 1 & 0
                                  \end{array}
 \right) \succeq 0\}.
\]
 Let $h^1_0(x)=x_1^2+ x_2^2 -1$ and $h^1_j(x)=2x_j$, $j=1,2$. 
We first observe that,   for each $(y^1_1,y^1_2)\in \Omega_1,$  the function  $h^1_0+\sum_{j=1}^2 y^1_j h^1_j$  is  an SOS-convex polynomial. Denote 
 $f_0(x)=x_1^4-x_2$ and $f_1(x)= x_1^2+ x_2^2+ 2\|(x_1,x_2)\| -1$. 
Then, $f_1(x)=\sup_{(y^1_1,y^1_2)\in \Omega_1}\{h^1_0(x)+y^1_1h^1_1(x)+y^1_2h^1_2(x)\}$, and so, $f_1$ is an SOS-convex semi-algebraic function. Obviously,  $f_0$ is an SOS-convex polynomial and thus is also an  SOS-convex semi-algebraic function. This shows  that  (EP) is an SOS-convex semi-algebraic program.

Let $x_0=(0,0)$. It can be verified that $f_1(x_0)=-1<0.$  %, $f$ is SOS-convex and $g_1(\cdot,y_1)$ is SOS-convex for all $y_1 \in \Omega_1$.
 Thus, Theorem \ref{th:exact_SDP} implies that ${\rm val}(EP)={\rm val}(ESDP)$ where (ESDP) is given by
\begin{eqnarray*}
(ESDP) & & \sup_{\lambda^1_0 \ge 0, \lambda^1_j \in \mathbb{R}, \mu \in \mathbb{R}} \{\mu: f_0+\left(\lambda^1_0 h^1_0 + \sum_{j=1}^2\lambda^1_j h^1_j\right) -\mu \in \Sigma^2_4[x], \\
& & \ \ \ \ \ \ \ \ \ \ \ \ \ \ \ \ \ \ \ \ \  \lambda^1_0 \left(\begin{array}{ccc}
                                  1 & 0 & 0 \\
                                  0 & 1 & 0 \\
                                  0 & 0 & 1
                                  \end{array}
 \right)+ \lambda^1_1 \left(\begin{array}{ccc}
                                  0 & 0 & 1 \\
                                  0 & 0 & 0 \\
                                  1 & 0 & 0
                                  \end{array}
 \right)+\lambda^1_2 \left(\begin{array}{ccc}
                                  0 & 0 & 0 \\
                                  0 & 0 & 1 \\
                                  0 & 1 & 0
                                  \end{array}
 \right) \succeq 0\}.
\end{eqnarray*}
Note that \begin{eqnarray*}
 & & f_0+\left(\lambda^1_0 h_1^0 + \sum_{j=1}^2\lambda^1_j h^1_j\right) -\mu \in \Sigma^2_4[x] \\
 &\Leftrightarrow &
x_1^4-x_2+\lambda^1_0 (x_1^2+x_2^2-1)+ 2\lambda_1^1x_1+ 2\lambda^1_2x_2-\mu \\
& & =\left(1, x_1,x_2,x_1^2,x_1x_2,x_2^2\right)\left(\begin{array}{cccccc}
W_{11} & W_{12} & W_{13} & W_{14} & W_{15} & W_{16}\\
W_{12} & W_{22} & W_{23} & W_{24}& W_{25} & W_{26}\\
W_{13} & W_{23} & W_{33} & W_{34}& W_{35} & W_{36}\\
W_{14} & W_{24} & W_{34} & W_{44}& W_{45} & W_{46}\\
W_{15} & W_{25} & W_{35} & W_{45}& W_{55} & W_{56}\\
W_{16} & W_{26} & W_{36} & W_{46}& W_{56} & W_{66}
\end{array}\right)\left(\begin{array}{c}
1 \\
x_1 \\
x_2 \\
x_1^2 \\
x_1x_2 \\
x_2^2 \end{array}\right), W=(W_{ij}) \in S_+^{6}, \\
& \Leftrightarrow & W_{11}=-\lambda^1_0-\mu, W_{12}=\lambda_1^1, 2W_{14}+W_{22}=2\lambda^1_0,  \\
& & W_{33}+2W_{16}=\lambda^1_0, 2W_{13}+W_{66}=-1+2\lambda^1_2, W_{44}=1, \\
& &  W_{23}=W_{24}=W_{34}=0, W_{i5}=0, i=1,\ldots,5, W_{i6}=0, i=1,\ldots,6, \\
& &  W=(W_{ij}) \in S_+^{6}.   \end{eqnarray*}
Thus, $(ESDP)$ can be equivalently rewritten as the following semidefinite programming problem:
\begin{eqnarray*}
\sup_{\lambda^1_0 \ge 0, \lambda^1_j \in \mathbb{R}, \mu \in \mathbb{R},W \in S^6} \{\mu&:& W_{11}=-\lambda^1_0-\mu, 2W_{12}=\lambda^1_1, 2W_{14}+W_{22}=\lambda^1_0,  \\
& & W_{11}=-\lambda^1_0-\mu, W_{12}=\lambda_1^1, 2W_{14}+W_{22}=2\lambda^1_0,  \\
& & W_{33}+2W_{16}=\lambda^1_0, 2W_{13}+W_{66}=-1+2\lambda^1_2, W_{44}=1, \\
& &  W_{23}=W_{24}=W_{34}=0, W_{i5}=0, i=1,\ldots,5, W_{i6}=0, i=1,\ldots,6, \\
& &  W=(W_{ij}) \in S_+^{6} \\
& &   \lambda^1_0 \left(\begin{array}{ccc}
                                  1 & 0 & 0 \\
                                  0 & 1 & 0 \\
                                  0 & 0 & 1
                                  \end{array}
 \right)+ \lambda_1^1 \left(\begin{array}{ccc}
                                  0 & 0 & 1 \\
                                  0 & 0 & 0 \\
                                  1 & 0 & 0
                                  \end{array}
 \right)+\lambda^1_2 \left(\begin{array}{ccc}
                                  0 & 0 & 0 \\
                                  0 & 0 & 1 \\
                                  0 & 1 & 0
                                  \end{array}
 \right) \succeq 0\}.
\end{eqnarray*}
Solving this semi-definite programming problem using CVX \cite{CVX1,CVX2}, we obtain the optimal value ${\rm val}(RP)={\rm val}(ESDP)=-0.414214 \approx 1-\sqrt{2}$ and the dual variable
$y^*=(y_1^*,\ldots,y_{15}^*) \in \mathbb{R}^{15}=\mathbb{R}^{s(4,2)}$ with $y_1^*=1$, $y_2^*=0$ and $y_3^*=0.414214 \approx \sqrt{2}-1$.
It can be verified that the conditions in Theorem \ref{th:3.2} are satisfied. So, Theorem \ref{th:3.2} implies that $x^*=(L_{y^*}(X_1),L_{y^*}(X_2))=(y_2^*,y_3^*)=(0,\sqrt{2}-1)$ is a solution for (EP).

Indeed, the optimality of $(0,\sqrt{2}-1)$ for (EP) can be verified independently. To see this, note that for all $(x_1,x_2)$ which is feasible for (EP), one has %$x_1^2+ x_2^2+ 2u_1^1 x_1+2u_1^2 x_2 -1\le 0, \, \forall \, (u_1^1,u_1^2) \in \mathcal{U}$, and so,
$$x_1^2+x_2^2+2\|(x_1,x_2)\|-1 \le 0.$$
In particular,
\[
|x_2|^2+2|x_2|-1=x_2^2+2|x_2|-1 \le 0,
\]
which implies that $|x_2| \le \sqrt{2}-1$. Thus, for all feasible point $(x_1,x_2)$ for (EP), $x_1^4-x_2 \ge -x_2 \ge -|x_2| \ge 1-\sqrt{2}$, and so, ${\rm val}(EP) \ge 1-\sqrt{2}$. On the other hand, direct verification shows that $(0,\sqrt{2}-1)$ is feasible for (EP) with the object value $1-\sqrt{2}$. So, ${\rm val}(EP)=1-\sqrt{2}$ and $(0,\sqrt{2}-1)$ is a solution of the problem (EP).}
\end{example}

%\begin{verbatim}
%cvx_begin sdp
%   variables lambda0 lambda1 lambda2 mu1 a b
%   dual variables Q1 Q2
%   A=[-lambda0-mu1,       (2*lambda1)/2,   (-1+2*lambda2)/2,       b, 0, 0;
%      (2*lambda1)/2,            a,                  0,           0, 0, 0;
%   (-1+2*lambda2)/2,            0,          lambda0,           0, 0, 0;
%            b,                0,                  0,           1, 0, 0;
%            0,                0,                  0,           0, 0, 0;
%            0,                0,                  0,           0, 0, 0;]
%        2*b+a==lambda0;
%B=[lambda0 0 lambda1;
%  0 lambda0 lambda2;
%  lambda1 lambda2 lambda0];
%   maximize(mu1)
%   Q1: A==semidefinite(6);
%   Q2: B==semidefinite(3);
%cvx_end
%
%Q=full(Q1);
%y=[]
%for i=1:6
%for j=i:6
%fprintf('Q(%1d,%1d) is %4.4f \n', i,j,Q(i,j))
%y=[y Q(i,j)];
%end
%end
%fprintf('y=%4.4f \n',y)
%\end{verbatim}

%
%\section{Application: Robust SOS-convex polynomial optimization}

\section{Applications to robust optimization}
In this section, we briefly outline how our results can be applied to the area of robust optimization \cite{BEN09} (for some recent development see \cite{Goberna,Gold,jl,jl1}). Consider the following robust SOS-convex optimization problem \begin{eqnarray*}
(RP) & \min & f(x) \\
& \mbox{ subject to } & g_i^{(0)}(x)+\sum\limits_{j=1}^{t_i}u_i^{(j)}g_i^{(j)}(x)+\sum\limits_{j=t_i+1}^su_i^{(j)}g_i^{(j)}(x) \le 0, \,\ \forall u_i\in \mathcal{U}_i,\,  \ i=1,\ldots,s,
\end{eqnarray*}
where $f,$ $g_i^{(j)}$, $i=1,\ldots,m$,  $j=0, 1,...,t_i,$ are  SOS-convex  polynomials,  $g_i^{(j)}$, $i=1,\ldots,m$,  $j=t_i+1,...,s,$ are affine functions, and $u_i$ are uncertain parameters and belong to  uncertainty sets $\mathcal{U}_i$, $i=1,\ldots,s$.

In the case where $\mathcal{U}_i$ is the so-called
restricted ellipsoidal uncertainty set given by
\begin{eqnarray*}
\mathcal{U}_i^e=\{(u_i^1,\ldots,u_i^{t_i},u_i^{t_i+1},\ldots,u_i^s)&:& \|(u_i^1,\ldots,u_i^{t_i})\|\le 1, \ u_i^j \ge 0, j=1,\ldots,t_i\} \\
& & \|(u_i^{t_i+1},\ldots,u_i^{s})\| \le 1\},
\end{eqnarray*}
this robust optimization problem was first examined in \cite{Gold} in the special case of robust convex quadratic optimization problems, and then subsequently in \cite{JLV15} for general robust SOS-polynomial optimization problems. In particular, \cite{JLV15} showed that the optimal value of (RP) with $\mathcal{U}_i=\mathcal{U}_i^e$ can be found by solving a related semi-definite programming problem (SDP) and raised an open question that how to found an optimal solution of (RP) from the corresponding related SDP.

As we will see, as a simple application of the result in Section 4, we can extend the exact semi-definite programming relaxations result in \cite{JLV15} to a more general setting and answer the open questions left in \cite{JLV15} on how to recover a robust solution from the semi-definite programming relaxation in this broader setting.

To do this, we first introduce the notion of restricted spectrahedron data uncertainty set which is a compact set  %admits an exact SDP relaxation.
%To do this, we define the restricted spectrahedron
%data uncertainty set as follows.
given by
\begin{eqnarray*}
\mathcal{U}^{s}_i=\{(u_i^{(1)},\ldots,u_i^{(t_i)},u_i^{(t_i+1)},\ldots,u_i^{(s)})  \in \mathbb{R}^{s}&: & A_i^0+\sum_{j=1}^s u_i^{(j)} A_i^j \succeq 0, \\
& & (u_i^{(1)},\ldots,u_i^{(t_i)}) \in \mathbb{R}^{t_i}_+, (u_i^{(t_i+1)},\ldots,u_i^{(s)}) \in \mathbb{R}^{s-t_i} \}.
\end{eqnarray*}
It is not hard to see that the restricted ellipsoidal uncertainty set is a special case of the restricted spectrahedron data uncertainty set as the norm constraint can be expressed as a linear matrix inequality.

Let $f_0(x)=f(x)$, $g_i(x,u_i)=g_i^{(0)}(x)+\sum\limits_{j=1}^{t_i}u_i^{(j)}g_i^{(j)}(x)+\sum\limits_{j=t_i+1}^su_i^{(j)}g_i^{(j)}(x)$ and $f_i(x)=\sup_{u_i \in \mathcal{U}_i}\{g_i(x,u_i)\}$, $i=1,\ldots,s$. From the construction of
the restricted spectrahedron data uncertainty, for
each $u_i \in \mathcal{U}_i^s$, $g_i(\cdot,u_i)$ is an SOS-convex polynomial.  Moreover, each uncertainty set $\mathcal{U}_i^s$ can be written as \begin{eqnarray*}
\mathcal{U}^{s}_i=\{(u_i^{(1)},\ldots,u_i^{(t_i)},u_i^{(t_i+1)},\ldots,u_i^{(s)})  \in \mathbb{R}^{s}&: & \widetilde{A}_i^0+\sum_{j=1}^s u_i^{(j)} \widetilde{A}_i^j \succeq 0\},
\end{eqnarray*}
where
\begin{equation}\label{A_i}
\widetilde{A}_i^0=\left(\begin{array}{cc}
       0_{t_i \times t_i} & 0\\
       0 & A_i^0
                    \end{array}
 \right),  \widetilde{A}_i^j=\left(\begin{array}{cc}
       {\rm diag} \,  e_j & 0\\
       0 & A_i^j
                    \end{array}
 \right), j=1,\ldots,t_i, \mbox{ and } \widetilde{A}_i^j=\left(\begin{array}{cc}
       0_{t_i \times t_i} & 0\\
       0 & A_i^j
                    \end{array}
 \right), j=t_i+1,\ldots,s.
\end{equation}
Here, $e_j \in \mathbb{R}^{n}$ denotes the vector whose $j$th element equals to one and $0$ otherwise.
Therefore, we see that the robust convex problem (RP) under the restricted spectrahedron data uncertainty (that is, $\mathcal{U}_i=\mathcal{U}_i^s$) can be regarded as a special SOS-convex semi-algebraic program. Therefore, Theorem \ref{th:exact_SDP} and Theorem \ref{th:3.2} can be applied directly to obtain the desired exact SDP relaxation result and the exact solution recovery property. For brevity, we omit the details here.   This extends the exact semi-definite programming relaxations result in \cite{JLV15} to a more general setting and answer the open questions left in \cite{JLV15} on how to recover a robust solution from the semi-definite programming relaxation in this broader setting.

\end{document}